
\documentclass[dvipsaoas,preprint]{imsart}

\RequirePackage[T1]{fontenc}
\RequirePackage{amsthm,amsmath,amsfonts}
\RequirePackage[numbers]{natbib}
\RequirePackage[colorlinks,citecolor=blue,urlcolor=blue]{hyperref}

% settings
%\pubyear{2005}
%\volume{0}
%\issue{0}
%\firstpage{1}
%\lastpage{8}
%\arxiv{math.PR/0000000}

\startlocaldefs
\numberwithin{equation}{section}
\theoremstyle{plain}
\newtheorem{thm}{Theorem}[section]
\newtheorem{lem}{Lemma}[section]

\theoremstyle{definition}
\newtheorem{defn}{Definition}[section]
\newtheorem{exmp}{Example}[section]

\def\lw{{\ell_1}}
\def\lt{{\ell_2}}
\def\linf{{\ell_\infty}}

\endlocaldefs

\begin{document}

\begin{frontmatter}
\title{Finite sample posterior concentration in high-dimensional regression}

\runtitle{Concentration in high-dimensional regression}

%\thankstext{T1}{Footnote to the title with the ``thankstext'' command.}

\begin{aug}
\author{\fnms{Nate} \snm{Strawn}\ead[label=e1]{nstrawn@math.duke.edu}},
\author{\fnms{Artin} \snm{Armagan}\ead[label=e2]{artin.armagan@sas.com}},
\author{\fnms{Rayan} \snm{Saab}\thanksref{t3}\ead[label=e3]{rayans@math.duke.edu}},
\author{\fnms{Lawrence} \snm{Carin}\ead[label=e4]{lcarin@ee.duke.edu}},
\and
\author{\fnms{David} \snm{Dunson}\ead[label=e5]{dunson@stat.duke.edu}}

%\thankstext{t1}{}
%\thankstext{t2}{First supporter of the project}
\thankstext{t3}{R. Saab was supported in part by a Banting Postdoctoral Fellowship, administered by the Natural Science and Engineering Research Council of Canada.}
%\thankstext{t4}{More thanks}
%\thankstext{t5}{Even more}
\runauthor{N. Strawn, A. Armagan, R. Saab et al.}

\affiliation{Duke University}

\address{Department of Mathematics \\
Duke University\\
\printead{e1,e3}}

\address{SAS Institute, Inc. \\
\printead{e2}}

\address{Department of Electrical and Computer Engineering\\
Duke University\\
\printead{e4}}

\address{Department of Statistical Sciences\\
Duke University\\
\printead{e5}}

\end{aug}

\begin{abstract}
We study the behavior of the posterior distribution in high-dimensional Bayesian Gaussian linear regression models having $p\gg n$, with $p$ the number of predictors and $n$ the sample size.  Our focus is on obtaining quantitative finite sample bounds ensuring sufficient posterior probability assigned in neighborhoods of the true regression coefficient vector ($\beta^0$) with high probability. We assume that $\beta^0$ is approximately $S$-sparse and obtain universal bounds, which provide insight into the role of the prior in controlling  concentration of the posterior. Based on these finite sample bounds, we examine the implied asymptotic contraction rates for several examples showing that sparsely-structured and heavy-tail shrinkage priors exhibit rapid contraction rates. We also demonstrate that a stronger result holds for the sparsity($S$)-Gaussian\footnote[2]{A binary vector of indicators ($\gamma$) is drawn uniformly from the set of binary sequences with exactly $S$ ones, and then each $\beta_i\sim\mathcal{N}(0,V^2)$ if $\gamma_i=1$ and $\beta_i=0$ if $\gamma_i=0$.} prior. These types of finite sample bounds provide guidelines for designing and evaluating priors for high-dimensional problems.
\end{abstract}

\begin{keyword}[class=AMS]
\kwd[Primary ]{62F15}
\kwd{62F15}
\kwd[; secondary ]{62F15}
\end{keyword}

\begin{keyword}
\kwd{asymptotics}
\kwd{Bayesian}
\kwd{compressible prior}
\kwd{high-dimensional}
\kwd{posterior contraction}
\kwd{regression}
\kwd{shrinkage prior}
\end{keyword}

\end{frontmatter}

\section{Introduction}
Consider the Gaussian linear regression model
\begin{eqnarray}
y_i=x_i^T\beta^0+e_i,\quad e_i \sim \mathcal{N}(0,\sigma^2),\quad i=1,\ldots,n,  \label{eq:base}
\end{eqnarray}
where $x_i$ and $\beta^0$ are $p$-dimensional vectors, and the noise variance $\sigma^2$ is known.  In modern applications, it has become desirable to collect high-dimensional data in which the sample size $n$ is much smaller than the number of predictors (or variables) $p$. For example, there is great interest (see \cite{GBK08,LDP07}) in reducing MRI scan times (hence small $n$ is desired) while still producing high-resolution images of excellent quality ($\widehat{\beta}$ very close to $\beta^0$). In the MRI setting, $\beta^0$ represents the characteristics of the entity being sensed ($e.g.$, material properties of a human body), and $x_i$ represents the $i$th projection-class measurement, where often in MRI $x_i$ corresponds to a particular Fourier projection, and hence $y_i$ is a (noisy) Fourier component of $\beta^0$. The vector $\beta^0$ may be sparse or nearly sparse in its native (pixel) basis, or alternatively $\beta^0$ may represent the (near-sparse) wavelet coefficients of the entity being sensed.

 A recent surge of research in the estimation-theoretic properties for (\ref{eq:base}) has established powerful techniques and groundbreaking results. Much of the theory supporting these advancements is predicated on a \emph{sparsity} (or compressibility) assumption for the true $\beta^0$. Supposing that $\beta^0$ is approximately $S$-sparse (that is, $\beta^0$ can be well approximated by a vector with $S$ nonzero entries), the theory of compressed sensing tells us that (conditional on a bound for the noise term) we can compute a close estimate $\widehat{\beta}$ to $\beta^0$ using only
\[
n\geq C S\log(p/S)
\]
samples (see \cite{CP07, CRT05, CT05, DMM09, FoucartRauhut2013} among many others). Reasonable bounds for the constant $C$ can be found in the works \cite{BDDW08, FoucartRauhut2013, RV08}. This field is termed ``compressed sensing,'' because rather than directly measuring the $p$ values of $\beta^0$, we perform $n\ll p$ linear measurements on all the values (encoding), and then computationally attempt to reconstruct $\beta^0$ from these measurements (decoding). These bounds establish the accuracy of reconstructing $\beta^0$ based on the $n\ll p$ compressive linear measurements. This general class of problems is also of interest in linear regression models, apart from compressed sensing; there, $x_i$ may represent a covariate vector and $\beta^0$ is the (assumed sparse or near-sparse) regression vector that maps the covariates to a noisy data sample $y_i$.

The estimators used in compressed sensing are generally motivated by relaxation of the usual penalties (e.g. \cite{Aka74} and variants) used in model selection, and hence convex optimization \cite{CP07, CRT05, CT05, Tro06} or iterative methods \cite{BI09, DMM09, NT09, TG07} produce solutions efficiently. The precise error bounds can be made explicit using metric geometry arguments  (as in \cite{CP07, CRT05, CT05, SY10}), and an asymptotic theory (see \cite{BD10,DMM09, DT05, RFG12}) reveals the existence of explicit phase transitions for practical recovery of sparse $\beta^0$. Results of this nature provide us with a statistical power calculation (calculation of the error bounds that hold with high probability given a sample of size $n$) whenever we have a reasonable estimate for the true sparsity or compressibility. 

For the example of MRI sensing, we can use numerous examples (e.g. many different patients' imaged anatomy), and then estimate a bound for the compressibility of this collected data in a (for example) wavelet basis (for more on wavelets, see \cite{Mallat08}). With knowledge of the anticipated (near) sparsity level of $\beta^0$ ($i.e.$, bound on $S$), the aforementioned theoretical results provide guidelines for the number of MRI scans $\{y_i\}_{i=1,n}$ required to image a new patient.

From the viewpoint of model selection and regression, one may estimate the sparsity level using the method in \cite{Lopes12}, and the determination of the sparsity level $S$ represents an upper bound on the complexity of a satisfactory model. In this setting, compressed sensing tells us that only $C S\log(p/S)$ measurements are required to closely approximate an $S$-sparse model contingent upon the fact that there is an $S$-sparse model ($\beta^0$) which explains the data in a satisfactory manner. From yet another perspective, if allowed $n\ll p$ samples, we may compute the largest $S$ such that $n\geq CS\log(p/S)$, which then guarantees that we will approximately recover a model of complexity $\leq S$ if one exists. Efficient cross validation can also be performed using the method in \cite{Ward2009}.

The sparsity assumption effectively imposes an improper prior (a distribution which is not a probability distribution) on $\beta$. This improper prior is a singular distribution on the $\binom{p}{S}$ planes in $\mathbb{R}^p$ whose members always have at most $S$ nonzero entries (the $S$-sparse planes), and the measure of a Borel set $\mathcal{A}\subset\mathbb{R}^p$ under this distribution is exactly the sum of the $S$-dimensional areas of the regions obtained by intersecting $\mathcal{A}$ with each $S$-sparse plane. In this framework, we see that the approach of compressed sensing coincides with maximum \emph{a posteriori} (MAP) estimation arising from this prior and the model (\ref{eq:base}). Solving this optimization problem requires the examination of all $\binom{p}{S}$ potential supports (lists of the nonzero entries) for the true parameter $\beta^0$. To circumvent this computational difficulty, a relaxed problem involving minimizing $\|\beta\|_1$ subject to $X\beta = y$ (or variants of this problem) may be solved. Such an approach admits an interpretation as a MAP estimation under a Laplace prior:
\[
\Pi_{\text{Lap}}(\beta)=\left(\frac{\lambda}{2}\right)^p\prod_{i=1}^p\exp\{-\lambda\vert\beta_i\vert\}
\]
The resulting MAP estimator for the problem (\ref{eq:base}) under the Laplace prior is the LASSO \cite{Tib96}, and there is extensive theory (predicated on knowledge of the true sparsity level) for the properties of this estimator in the $n\ll p$ case \cite{CP07, Wang2009, Wang2011}, as well as the general case. 

While the LASSO has nearly minimax properties in the $n\ll p$ setting (see \cite{DMM09}) when each $\beta_i$ is drawn from a spike-slab model of the form
\[
\beta_i\sim \gamma_i\delta_0(\beta_i)+(1-\gamma_i)\rho(\beta_i)
\]
where $\delta_0$ is the Dirac delta distribution at $0$, $\rho$ is an arbitrary distribution on $\mathbb{R}$, and $\gamma_i\sim\text{Bernoulli}(S/p)$, it has been generally observed that compressible signals exhibit more structure than that encapsulated by the Laplace prior (see \cite{Cev09}). As a result, model-based compressed sensing \cite{BCDH10}, compressible priors \cite{GCD11}, and structured sparsity \cite{HZM09} have been investigated in order to produce estimators with even better properties for the appropriate data sets. 

The methods referenced above incorporate structural heuristics in a way that facilitates computation, which means that they ultimately have to sidestep detailed \emph{a priori} information when it is available. Moreover, from the Bayesian perspective, MAP estimation is only the tip of the iceberg, as it coincides with the Bayes-optimal estimate under the Procrustean loss function ${\bf 1}_{\{\beta^0\}}(\widehat{\beta})$ (which assigns a penalty of zero if $\widehat{\beta}$ is exactly equal to $\beta^0$ and otherwise assigns a penalty of one). The goal of this paper is to establish the existence of a general posterior concentration phenomenon when $n\ll p$, which should encourage the investigation of Bayesian procedures as an alternative.

\subsection{Contributions}

In this paper we demonstrate the existence of a general posterior concentration phenomenon in compressed sensing and the $n\ll p$ sampling regime. For priors concentrated on the set of sparse $\beta$, we show that posteriors concentrate near true sparse $\beta^0$ in the $n\ll p$ regime. Other than this concentration requirement  and standard assumptions from Bayesian analysis, the prior may assume any form.
\begin{itemize}
\item Our main result, Theorem \ref{mainthm}, provides an explicit finite sample bound on the expected concentration of a posterior for an arbitrary prior. The utility of this bound is especially evident when
\begin{enumerate}
\item[(i)] the probability the prior assigns to a small ball around the true $\beta_0$ is not too small;
\item[(ii)] the probability the prior assigns to signals that are not sparse (or approximately sparse) is very small.
\end{enumerate} 
%This result quantitatively validates the reliability of sampling procedures such as in Park and Casella \cite{PC08}, in that samples from this procedure should concentrate around the true parameter.
\item We outline the application of Theorem \ref{mainthm} toward producing power calculations for high-probability bounds on model uncertainty, and we discuss the construction of credible regions for the true parameter $\beta^0$ for any choice of prior distribution $\Pi$ on the unknown coefficient vector $\beta$ assuming that (\ref{eq:base}) is the true data-generating model.
\item While Theorem \ref{mainthm} is the first result of its kind that we are aware of, we also show that it is not sharp. We demonstrate that a stronger bound is theoretically possible in at least one difficult, but tractable case (Theorem \ref{UG}). This indicates that the posterior concentration phenomenon is stronger than our current theory suggests. We provide insight into the disparity between these bounds, and discuss the inherent difficulty in obtaining a sharper version of Theorem \ref{mainthm} which applies universally to all priors.
\item Theorem \ref{mainthm} is employed to demonstrate Theorem \ref{asymptopia}, a generic asymptotic posterior contraction result. We then perform more precise analysis for the sparsity($S$)-Gaussian and Bernoulli-Gaussian priors (see Examples \ref{SGexample} and \ref{BGexample} for the definitions of these priors), and determine asymptotic rates of contraction for these types of models. 
\end{itemize}

We work through several examples and obtain asymptotic contraction rates for the sparsity($S$)-Gaussian and Bernoulli-Gaussian priors. The successful contraction in these examples validates our theory for priors which are concentrating very heavily on low-dimensional subspaces of $\mathbb{R}^p$. For priors with small-ball probabilities proportional to the volume of a $p$-dimensional ball, we show that our universal bound is not sufficient to ensure asymptotic contraction. The reason for this is that straightforward lower bounds on the normalization constant for the posterior (the denominator in Bayes's theorem) rely upon Markov's inequality, which suffers from a curse of dimensionality when applied to Gaussian distributions; the lower bounds from Markov's inequality 
\[
1=(2\pi\sigma^2)^{p/2}\int_{\mathbb{R}^p} e^{-\frac{\Vert\beta\Vert_{\lt}^2}{2\sigma^2}}d\beta\geq \frac{e^{-\frac{r_p^2}{2\sigma^2}}}{\sqrt{2\pi\sigma^2}^p}\frac{\pi^{\frac{p}{2}}}{\Gamma(\frac{p}{2}+1)}r_p^p
\]
decay exponentially in $p$ for any sequence of radii $r_p>0$, as well as the optimal sequence $r_p=\sigma\sqrt{p}$. That is to say, the volume of the largest cylinder fitting under the graph of $\mathcal{N}(0,\sigma^2 I_p)$ vanishes as $p\rightarrow\infty$. On the other hand, Markov's inequality is the most convenient choice when we want a universal bound for the normalization constant of the posterior. We essentially employ Markov's inequality twice, once in the proof of Theorem \ref{mainthm} and once again when we estimate small-ball probabilities in our examples. For certain specific instances where a stronger estimate of the normalization constant is available, the implied bound is of course much sharper. For example, we perform such an analysis for the sparsity($S$)-Gaussian prior in Theorem \ref{UG}.

\subsection{Related work}

We draw our main inspiration from the extensive work in compressed sensing. The methods of Cand\`{e}s, Romberg, and Tao \cite{CRT05} began a wave of research into this phenomenon and the precise behavior of estimators. The methods developed along these lines are characterized by geometric bounds and concentration of measure. A similar, more precise (but asymptotic) estimation theory for $\ell_1$-minimization was developed by Donoho and Tanner \cite{DT05}, and rigorous methods related to the replica method have been used to expand greatly upon this theory \cite{BLM12, BM11, DMM09}. However, this asymptotic theory specifically requires the use of random matrices and the lack of convergence rates. Thus, finite sample bounds are not yet accessible from this approach.  

In relation to a more Bayesian approach, the replica method has also been used to obtain results concerning the behavior of MAP estimators in the work of Rangan, Fletcher, and Goyal \cite{RFG12}. Because a rigorous theory for the replica method has not yet been established, making the results of this work rigorous shall require a leap forward in technology.

While there has been some related work in the posterior asymptotic community, the issues that this paper addresses are completely new. Ghosal \cite{Ghosal99} obtained a Bernstein-von Mises theorem providing sufficient conditions for asymptotic normality of the posterior distribution for $\beta$ under model (\ref{eq:base}) allowing non-Gaussian residuals, but the author requires that $p$ grows much slower than $n$. Jiang \cite{Jiang07} studied rates of convergence of the predictive distribution obtained using Bayesian variable selection within a generalized linear model having a diverging number of candidate predictors, but his results focus only on the predictive posterior of $y$ given $X$ and not on the posterior of $\beta$. Bontemps \cite{Bontemps11} obtained a Bernstein-von Mises theorem for a class of semiparametric and nonparametric Gaussian regression models.  For the model (\ref{eq:base}) compared with \cite{Ghosal99}, his results allow a faster growth rate of $p \le n$. However, addressing our interest in $p\gg n$ requires new theory; in this much more challenging case, we do not attempt a Bernstein-von Mises result but instead apply our finite sample probabilistic bounds on the posterior probability assigned to neighborhoods of $\beta_0$ to obtain a general asymptotic contraction result (Theorem \ref{asymptopia}). Using an approach similar to ours,  asymptotic posterior contraction has been studied in the regime $p\le n$ \cite{ADLBS13}.

Along the lines of priors promoting sparsity, a strong theory for the normal means problem has been developed in \cite{CV12} under the assumption that $p=o(n)$. Their asymptotic theory relies upon comparison with a minimax framework. To imitate this theory, the most obvious approach would leverage the framework in \cite{DMM09}, but this would only provide asymptotic guarantees given the current state of that theory. As such, we leave the investigation of this approach to the future.

Another closely related area of research involves the construction of hypothesis tests and confidence intervals based upon the LASSO \cite{JM13a,JM13b,vdGBR13}. These methods are very recent, and we anticipate that these methods may provide a way forward for a sharper analysis of Bayesian model selection. The techniques and principles of those works are quite different from those used in this paper, so we leave this problem as a path of further inquiry.

%Finally, it should be noted that the geometric flavor of our results is reminiscent of the Johnson-Lindenstrauss lemma \cite{JL84, Mat08}. 

\subsection{Organization}

In Section 2, we introduce notation and provide background results. Section 3 introduces our main result, the explicit bound on expected posterior concentration for an arbitrary prior and a fixed problem size. We elaborate upon the role of each term in our bound, and discuss how to derive power calculations from the result. Section 3 concludes with an investigation of different directions for sharpening the bound in Theorem \ref{mainthm}. In particular, we demonstrate that for the sparsity($S$)-Gaussian prior,  a sharper bound is attainable using a brute force analysis. In Section 4, we explore asymptotic ramifications of our main inequality. We begin with a general asymptotic theorem, and then proceed to apply our main result to calculate posterior contraction rates for some example priors. Appendices A and B contain the supporting technical material for Section 3.

\section{Preliminaries}

\subsection{Notation}

Assuming that $\beta^0$ is fixed and unknown and that $y$ follows the linear model (\ref{eq:base}), we fix a prior $\Pi$ on $\mathbb{R}^p$ and focus on the posterior for observed data
\begin{equation}
y=X\beta^0+e, \label{model}
\end{equation}
where $y$ is the $n$-dimensional response, $X$ is the $n\times p$ design matrix.\footnote{In compressed sensing literature, this is called the measurement matrix. As we shall see in the next subsection, this matrix must satisfy certain restrictions. In the relevant theory, the matrices with the best provable guarantees are random (see \cite{RV08}), but it is possible to deterministically construct such matrices \cite{BDFKK11,FM11,Iwen2009}. In either scenario, we still refer to $X$ as the design matrix.} In the next subsection, we see that this matrix must satisfy restrictions, 
% Note that there was a typo above, it said previously "matrix must satify matrix"
and the errors are i.i.d. samples from a Normal distribution with mean zero and variance $\sigma^2$ (so $e \sim\mathcal{N}(0,\sigma^2 I_n)$ with $I_n$ the $n$ by $n$ identity matrix).  For a fixed problem $(S,n,p,X,\beta^0)$, we designate the following assumptions:
\begin{enumerate}
\item[(A1)] the $i$th column of $X$ satisfies $\Vert X_i\Vert_{\lt}^2=n$ for all $i=1,\ldots, p$
\item[(A2)] $\beta^0$ is $S$-sparse ($\Vert\beta^0\Vert_{\ell_0}\leq S$)
\item[(A3)] $\sigma$ is known.
\end{enumerate}
Here, $\Vert X_i\Vert_{\lt}^2=\sum_{j=1}^n X_{ji}^2$ and
\[
\Vert\beta^0\Vert_{\ell_0}=\vert\text{supp}(\beta^0)\vert=\vert\{i\in[p]:\beta_i^0\not=0\}\vert.
\]
An $\ell_u$\footnote{In compressed sensing literature, this is typically denoted $\ell_p$. In this paper, we have opted to use the conventions of the statistical literature, and so $p$ is the number of predictors.} ball of radius $\varepsilon$ centered at $\beta$ is denoted
\[
B_\varepsilon^{\ell_u}(\beta)=\left\{x\in\mathbb{R}^p:\Vert x-\beta\Vert_{\ell_u}=\left(\sum_{i=1}^p\vert x_i-\beta_i\vert^u\right)^{1/u}<\varepsilon\right\}.
\]

\begin{defn}
For any $\beta\in\mathbb{R}^p$ and any natural number $S\leq p$, let $\sigma_S(\beta)$ denote the best $S$-term approximation error of $\beta$ so that
\begin{equation}
\sigma_S(\beta)=\inf_{\Vert b\Vert_{\ell_0}\leq S}\Vert\beta-b\Vert_\lw.\footnote{Approximation error in the $\lw$ norm is chosen for simplicity, but our results hold for other norms as well.}
\end{equation}
Furthermore, for any $R\geq 0$, let
\begin{equation} 
\mathcal{P}_{S,R}=\{\beta\in\mathbb{R}^p:\sigma_S(\beta)\leq R\}\label{sieve}
\end{equation}
denote the set of $(S,R)$-compressible vectors.
\end{defn}
\noindent Note that $\mathcal{P}_{S,0}$ is exactly the union of canonical $S$-dimensional subspaces in $\mathbb{R}^p$. When $S$ and $R$ are clear from the context, we shall simply let $\mathcal{P}=\mathcal{P}_{S,R}$.

Given the model (\ref{model}), we let $\mathcal{L}(\beta\vert y):=f(y\vert\beta)$ denote the likelihood of $\beta\in\mathbb{R}^p$ given the outcome $y\in\mathbb{R}^n$, and hence
\begin{equation}
f(y\vert\beta)=(2\pi\sigma^2)^{-n/2}\exp\{-\Vert y-X\beta\Vert_{\lt}^2/2\sigma^2\}.
\end{equation}
For any $\beta\in\mathbb{R}^p$, Borel measurable $U\subset\mathbb{R}^n$, and Borel measurable function $F:\mathbb{R}^n\rightarrow\mathbb{R}$, we let
\begin{equation}
\text{pr}_\beta(U)\text{ and }\mathbb{E}_\beta F
\end{equation}
denote the probability of the event $y\in U$ given the parameter $\beta$ and the expectation of $F(y)$ given the parameter $\beta$, respectively. We also let $U^c$ denote the complement of the set $U$, and we let ${\bf 1}_U$ denote the indicator function of $U$.
Finally, for a linear operator $X: \mathbb{R}^p \rightarrow \mathbb{R}^n$, we use the notation $\|X\|_{\ell_u \rightarrow \ell_v}$ to denote the operator norm 
\begin{equation}\|X\|_{\ell_u \rightarrow \ell_v} = \max_{\{\beta: \|\beta\|_{\ell_u}=1\}}\|X\beta\|_{\ell_v}.\end{equation}
\subsection{The Dantzig selector}
The Dantzig selector  is an important ingredient for our proofs. 
The properties of the Dantzig selector depend upon the design matrix $X$, and one simple assumption laid out by Cand\`{e}s and Tao \cite{CT05} is that the column norms of $X$ all equal one. Because our noise model has expected magnitude $\sigma\sqrt{n}$, we rescale $X$ so the column norms are all $\sqrt{n}$. %in order to boost the signal to noise ratio. 
We shall use $\widetilde{X}=\frac{1}{\sqrt{n}}X$ as an intermediate quantity to translate the results of Cand\`{e}s and Tao to our setting. 

\begin{defn}
For a response vector $y\in\mathbb{R}^n$ and a design matrix $X$, the Dantzig selector is the solution to the program
\begin{equation}
\min\Vert\beta\Vert_\lw\text{ subject to }\Vert \widetilde{X}^T(y-\widetilde{X}\beta)\Vert_\linf\leq\lambda_p\sigma\label{dantzigprog}
\end{equation}
where $\lambda_p=\sqrt{2(1+\alpha)\log p}$. The role of the free parameter $\alpha>0$ is made apparent in Theorem \ref{dantzigthm}. We let $\widetilde{\beta}$ denote the solution to this linear programming problem, and set $\widehat{\beta}=\widetilde{\beta}/\sqrt{n}$ (we rescale because the theory for the Dantzig selector is for design matrices with unit column norms).
\end{defn}

In order to ensure reconstruction properties for the Dantzig selector for all $\beta$ of a sufficient sparsity, we must put conditions on $\widetilde{X}$. The first quantity of interest is the restricted isometry constant, which is the smallest constant $\delta_k(\widetilde{X})$ satisfying
\begin{equation}
(1-\delta_k)\Vert b\Vert_{\lt}^2\leq\Vert \widetilde{X}b\Vert_{\lt}^2\leq(1+\delta_k)\Vert b\Vert_{\lt}^2
\end{equation}
for all $b\in\mathcal{P}_{k,0}$. Ideally, the constant $\delta_k$ is small enough to ensure that sufficiently sparse $b$ are far from the kernel of $\widetilde{X}$. The other quantity of interest is the restricted orthogonality constant $\theta_{k,k^\prime}(\widetilde{X})$, which is defined to be the smallest constant such that 
\begin{equation}
\vert\langle \widetilde{X}_Tb,\widetilde{X}_{T^\prime}b^\prime\rangle\vert\leq\theta_{k,k^\prime}\Vert b\Vert_{\lt}\Vert b^\prime\Vert_{\lt}
\end{equation}
for all $b$, $b^\prime$, disjoint $T,T^\prime\subset \{1,2,\ldots,p\}$, $\vert T\vert\leq k$, $\vert T^\prime\vert\leq k^\prime$, and $|T|+|T^\prime|\leq p$. Here, $\widetilde{X}_T$ and $\widetilde{X}_{T^\prime}$ are the matrices formed by respectively concatenating the columns of $\widetilde{X}$ with indices in $T$ and $T^\prime$. Again, the ideal $\theta_{k,k^\prime}$ is small, so disjoint collections of columns of $\widetilde{X}$ span nearly orthogonal subspaces.

With the restricted isometry and restricted orthogonality constants defined, we are now able to translate the theorem of Cand\`{e}s and Tao into our setting.

\begin{thm}[Cand\`{e}s and Tao '05]\label{dantzigthm}
Let $S$ be fixed so that $\delta_{2S}(\widetilde{X})+\theta_{S,2S}(\widetilde{X})<1$ and fix $R\geq 0$. If $\beta^0\in\mathcal{P}_{S,R}$, then the rescaled solution to (\ref{dantzigprog}) satisfies
\begin{equation}
\Vert\widehat{\beta}-\beta^0\Vert_\lt\leq\frac{4\sqrt{2}\sigma}{1-\delta-\theta}\sqrt{\frac{2(1+\alpha)S\log p}{n}}+2\frac{1-\delta+\theta}{1-\delta-\theta}\frac{R}{\sqrt{S}}
\end{equation}
with probability greater than $1-\frac{1}{p^\alpha\sqrt{\pi\log p}}$.
\end{thm}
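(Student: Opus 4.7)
The plan is to reduce the statement to the original Cand\`{e}s--Tao Dantzig selector theorem for design matrices with unit column norms, and then rescale. By assumption (A1), the matrix $\widetilde{X}=X/\sqrt{n}$ has unit $\lt$-norm columns, so I would first rewrite the observation model as
\[
y = X\beta^0 + e = \widetilde{X}\gamma + e,\qquad \gamma := \sqrt{n}\beta^0,
\]
and observe that the program (\ref{dantzigprog}) is literally the Cand\`{e}s--Tao Dantzig selector applied to the triple $(y,\widetilde{X},\lambda_p\sigma)$ with unknown $\gamma$. Hence $\widetilde{\beta}$ is the unscaled Dantzig estimator for $\gamma$, and the restricted isometry and orthogonality constants in the hypothesis are those attached to $\widetilde{X}$, exactly as in the original theorem.

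Next I would lift the approximate-sparsity hypothesis: if $b^\star$ is $S$-sparse with $\Vert\beta^0-b^\star\Vert_\lw\leq R$, then $\sqrt{n}b^\star$ is also $S$-sparse and $\Vert\gamma-\sqrt{n}b^\star\Vert_\lw\leq\sqrt{n}R$, so $\sigma_S(\gamma)\leq\sqrt{n}R$. Invoking the original Cand\`{e}s--Tao bound as a black box, so that the cone/tube inequality, the dyadic decomposition of the off-support tail, and the restricted-isometry/orthogonality chaining are all imported, I obtain
\[
\Vert\widetilde{\beta}-\gamma\Vert_\lt \leq \frac{4\sqrt{2}\sigma}{1-\delta-\theta}\sqrt{2(1+\alpha)S\log p} + 2\,\frac{1-\delta+\theta}{1-\delta-\theta}\,\frac{\sqrt{n}R}{\sqrt{S}}
\]
on the event $\mathcal{E}:=\{\Vert\widetilde{X}^T e\Vert_\linf\leq\lambda_p\sigma\}$ that the true $\gamma$ is feasible. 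Dividing through by $\sqrt{n}$ and substituting $\widehat{\beta}=\widetilde{\beta}/\sqrt{n}$ yields exactly the claimed inequality on the same event.

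Finally, to bound $\text{pr}(\mathcal{E}^c)$, each coordinate $(\widetilde{X}^T e)_i=\widetilde{X}_i^T e$ is centered Gaussian with variance $\Vert\widetilde{X}_i\Vert_\lt^2\sigma^2=\sigma^2$ by (A1). Applying the Mills-ratio tail estimate $\text{pr}(|Z|>t)\leq\sqrt{2/\pi}\,e^{-t^2/2}/t$ coordinate-wise, taking the union bound over $i\in[p]$, and substituting $\lambda_p^2=2(1+\alpha)\log p$ gives
\[
\text{pr}(\mathcal{E}^c)\leq \frac{p\cdot e^{-(1+\alpha)\log p}}{\lambda_p\sqrt{\pi/2}} = \frac{1}{p^{\alpha}\sqrt{\pi(1+\alpha)\log p}} \leq \frac{1}{p^{\alpha}\sqrt{\pi\log p}},
\]
which is the stated probability. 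Since the deterministic core is a direct quotation of Cand\`{e}s--Tao, there is no serious obstacle: the only substantive work is bookkeeping the scale factors through the identifications $\gamma=\sqrt{n}\beta^0$, $\sigma_S(\gamma)=\sqrt{n}\sigma_S(\beta^0)$, and $\widehat{\beta}=\widetilde{\beta}/\sqrt{n}$, and verifying that the feasibility event and the $\lw$-minimization objective both pull back cleanly under the rescaling. This bookkeeping is the only place where care is required.
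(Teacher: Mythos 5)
Your proposal is correct and follows essentially the same route as the paper: pass to the unit-column matrix $\widetilde{X}$ and the rescaled target $\sqrt{n}\beta^0$, run the Cand\`{e}s--Tao deterministic analysis on the event that the true (rescaled) vector is feasible, note $\sigma_S(\sqrt{n}\beta^0)\leq\sqrt{n}R$, divide by $\sqrt{n}$, and control the feasibility event by a Gaussian tail bound plus a union bound over the $p$ columns. The only caveat is that the two-term bound with the $R/\sqrt{S}$ compressibility term is not a verbatim statement in Cand\`{e}s--Tao, which is why the paper briefly re-derives it from their Lemma 3.1 (cone inequality plus restricted isometry/orthogonality chaining) rather than citing it outright; your import of that machinery is exactly that short derivation.
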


For completeness, we prove this version of the theorem in Appendix A. Since we shall employ the above condition on $\widetilde{X}$ to invoke this theorem and to perform further analysis, we add the following assumption.
\begin{enumerate}
\item[(A4)] $\delta\equiv\delta_{2S}(\widetilde{X})$ and $\theta\equiv\theta_{S,2S}(\widetilde{X})$ satisfy $\delta+\theta<1$
\end{enumerate}
%In the case of increasing problem sizes, we shall assume that $\delta$ and $\theta$ remain fixed (or are at least nonincreasing as the problem size increases). While at first glance this may seem to constrain the applicability of our theory, such conditions are standard in the theoretical literature on sparse reconstructions and obtaining universal statements in problems where $n\ll p$ without similar conditions is an open problem. Another possible concern is that verification of these constants is combinatorially complex, however it has been well established that many families of random matrices satisfy this condition with high probability when $n \geq CS \log^c(p/S)$. In particular, matrices whose entries are drawn i.i.d. $\mathcal{N}(0,1/n)$, and matrices with sub-Gaussian entries satisfy this condition with high probability with $c=1$. Other matrices that satisfy such a condition (albeit with $c>1$) include $n\times p$ matrices whose rows are drawn (uniformly) at random from orthonormal bases such as the Fourier basis. The interested reader is referred, for example, to \cite{CRT05}.
In the case of increasing problem sizes, we shall assume that $\delta$ and $\theta$ remain fixed (or are at least nonincreasing as the problem size increases). While at first glance this may seem to constrain the applicability of our theory, such conditions are standard in the theoretical literature on sparse reconstructions and obtaining universal statements in problems where $n\ll p$ without similar conditions is an open problem. 

Another possible concern is that verification of these constants is combinatorially complex, however it has been well established that many families of random matrices satisfy this condition with high probability when $n \geq CS \log^c(p/S)$. In particular, matrices whose entries are drawn i.i.d. $\mathcal{N}(0,1/n)$, and matrices with sub-Gaussian entries satisfy this condition with high probability with $c=1$. 
More specifically, if $\sqrt{n}X$ is an $n\times p$ random matrix with independent, isotropic, and sub-Gaussian%
 \footnote{A random variable $Y$ is sub-Gaussian if $\Pr(|Y|\geq t) < c_1 e^{-c_2 t^2}$ for some $c_1,c_2>0$ and all $t>0$. A random vector $Z$ is sub-Gaussian if $ \langle Z, x\rangle$ is sub-Gaussian for all $x$ with $\|x\|_2=1$ (and constants $c_1$ and $c_2$ independent of $x$). A random vector $Z$ is isotropic if $\mathbb{E}|\langle Z,x \rangle|^2 = \|x\|_2^2$ for all $x$. See \cite{FoucartRauhut2013}[Chapter 9].}
 rows with $n\geq C\hat\delta^{-2}(S\ln(ep/S) +\ln(2\varepsilon^{-1}) )$, then the restricted isometry constant of $X$ satisfies $\delta_S \leq \hat\delta$, with probability at least $1-\varepsilon$ (for example $C\approx 80.098$ for Gaussian matrices), see \cite{FoucartRauhut2013}. Noting that $\theta_{S,2S}\leq \delta_{3S}$, this gives an idea of the size of $\delta$ and $\theta$, for such sub-Gaussian random matrices, as a function of the dimensions. 
Other matrices that satisfy such restricted isometry conditions (albeit with $c>1$) include $n\times p$ matrices whose rows are drawn (uniformly) at random from orthonormal bases such as the discrete Fourier basis. The interested reader is referred, for example, to \cite{CRT05, FoucartRauhut2013}.
\section{The finite sample bound and power calculations}

In this section, we state and describe our universal posterior concentration bound for finite sample sizes. This bound gives us a precise power calculation whenever we desire to bound uncertainty in our model. Moreover, it may be specialized to provide asymptotic analyses for various priors, as shall be demonstrated in the Section 5. The proof of Theorem \ref{mainthm} is carried out in Appendix A.

\begin{thm}\label{mainthm}
Suppose $\beta^0\in\mathcal{P}=\mathcal{P}_{S,R}$ and that $\Pi$ is an arbitrary prior on $\mathbb{R}^p$.  Let
\begin{eqnarray}
\varepsilon = \frac{8\sigma}{1-\delta-\theta}\sqrt{\frac{(1+\alpha)S\log p}{n}}+2\frac{1-\delta+\theta}{1-\delta-\theta}\frac{R}{\sqrt{S}},
\end{eqnarray}
and assume (A1), (A3) and (A4). For any $\alpha>0$ and $\tau>0$ with $0<1+\tau<\alpha$, we have
\begin{eqnarray}
\mathbb{E}_{\beta^0}\Pi(B_{2\varepsilon}^{\lt}(\beta^0)^c\vert y)&\leq& \frac{1}{p^\alpha\sqrt{\pi\log p}}\label{fixedComp}\\
&&+\frac{p^{1+\tau}\Pi(B_{2\varepsilon}^{\lt}(\beta^0)^c\setminus\mathcal{P})}{\Pi(B_{C_\tau}^{\lw}(\beta^0))}\label{sparseComp}\\
&&+\frac{p^{1+\tau}}{p^\alpha\sqrt{\pi\log p} \Pi(B_{C_\tau}^{\lw}(\beta^0))}\label{concentrationComp}\\
&&+\frac{1}{p^{\tau}\sqrt{\pi\log p}}\label{matrixComp},
\end{eqnarray} 
where
\begin{equation}
C_\tau=\frac{1}{3}\sqrt{\frac{2\sigma^2(1+\tau)\log p}{n}}.
\end{equation}
\end{thm}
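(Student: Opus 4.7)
The plan is to decompose the expected posterior mass via two high-probability events and to control Bayes's denominator through a small-ball Markov estimate. I would introduce $E_1 = \{\|\widehat\beta-\beta^0\|_\lt \leq \varepsilon\}$, which has probability at least $1 - 1/(p^\alpha\sqrt{\pi\log p})$ by Theorem \ref{dantzigthm}, and $E_2 = \{\|\widetilde{X}^T e\|_\linf \leq \sigma\sqrt{2(1+\tau)\log p}\}$, whose complement has probability at most $1/(p^\tau\sqrt{\pi\log p})$ by a standard Gaussian maximal inequality (each entry of $\widetilde X^T e$ is $\mathcal N(0,\sigma^2)$ by (A1)). Using the trivial bound $\Pi(\cdot|y)\leq 1$ on $E_1^c$ and $E_2^c$ immediately yields the terms (\ref{fixedComp}) and (\ref{matrixComp}).

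On $E_2$ I would lower bound $D(y):=\int f(y|\beta)d\Pi(\beta)$ by restricting the integral to $\beta \in B_{C_\tau}^\lw(\beta^0)$. Expanding the log-likelihood ratio,
\[
\log\frac{f(y|\beta)}{f(y|\beta^0)} = \frac{\langle e, X(\beta-\beta^0)\rangle}{\sigma^2} - \frac{\|X(\beta-\beta^0)\|_\lt^2}{2\sigma^2},
\]
H\"older-bounding the linear term by $\|X^T e\|_\linf \|\beta-\beta^0\|_\lw$ (controlled on $E_2$) and the quadratic term by $n\|\beta-\beta^0\|_\lw^2$ (using $\|X\|_{\lw\to\lt}=\sqrt{n}$ from (A1)), the constant $C_\tau=\tfrac{1}{3}\sqrt{2\sigma^2(1+\tau)\log p/n}$ is tuned precisely so that $f(y|\beta)\geq f(y|\beta^0)\,p^{-(1+\tau)}$ uniformly on $B_{C_\tau}^\lw(\beta^0)$, giving $D(y)\geq f(y|\beta^0)\,p^{-(1+\tau)}\,\Pi(B_{C_\tau}^\lw(\beta^0))$.

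The posterior mass is then split along the compressible sieve: $\Pi(B_{2\varepsilon}^\lt(\beta^0)^c|y)=\Pi((B_{2\varepsilon}^\lt(\beta^0)^c\setminus\mathcal{P})|y)+\Pi((B_{2\varepsilon}^\lt(\beta^0)^c\cap\mathcal{P})|y)$. For the non-compressible piece on $E_2$, combining Bayes's rule with the Step 2 lower bound, taking expectation, and swapping integrals via Fubini (using $\int f(y|\beta)dy=1$, equivalently $\mathbb{E}_{\beta^0}[f(y|\beta)/f(y|\beta^0)]=1$) produces (\ref{sparseComp}) with the clean prior-mass factor $\Pi(B_{2\varepsilon}^\lt(\beta^0)^c\setminus\mathcal{P})$. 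For the compressible piece I further condition on $E_1$: on $E_1$ the Dantzig conclusion forces $A\cap\mathcal{P}\subseteq\{\beta\in\mathcal{P}:\|\beta-\widehat\beta\|_\lt\geq\varepsilon\}$, and combining the Dantzig LP feasibility $\|\widetilde X^T(y-\widetilde X\widehat\beta)\|_\linf\leq\lambda_p\sigma$ with (A4) shows that $\|y-X\beta\|_\lt$ grows quadratically as sparse $\beta$ moves away from $\widehat\beta$, so $f(y|\beta)$ is exponentially smaller than $f(y|\widehat\beta)$ on that set and its posterior mass is negligible; on $E_1^c\cap E_2$, the same Fubini--Markov recipe as for the non-compressible piece now picks up the additional factor $\text{pr}_{\beta^0}(E_1^c)$, producing (\ref{concentrationComp}) after the trivial bound $\Pi(A\cap\mathcal{P})\leq 1$.

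The main technical obstacle is the denominator calibration: both the cross term and the quadratic term in the log-likelihood expansion contribute $O(\log p)$ factors on $E_2$, and the constant $\tfrac{1}{3}$ in $C_\tau$ must be chosen so that their sum stays below $(1+\tau)\log p$. A subtler difficulty is the sparse-but-far piece on $E_1$, requiring the restricted isometry and restricted orthogonality content of (A4)---rather than the coarser $\lw\to\lt$ bound from (A1)---because $\beta-\widehat\beta$ is only $2S$-compressible and standard cone-of-sparsity arithmetic is needed to translate $\|\beta-\widehat\beta\|_\lt\geq\varepsilon$ into exponential smallness of $f(y|\beta)$.
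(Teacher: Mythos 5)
Your first three ingredients match the paper's proof and are sound: the Dantzig event $E_1$ with the trivial bound on $E_1^c$ gives (\ref{fixedComp}), the noise event $E_2$ gives (\ref{matrixComp}), the small-ball calibration of $C_\tau$ (cross term plus quadratic term staying below $2\sigma^2(1+\tau)\log p$) gives the denominator bound, and the Tonelli/change-of-measure step with $\mathbb{E}_{\beta^0}[f(y\vert\beta)/f(y\vert\beta^0)]=1$ gives (\ref{sparseComp}). The gap is in your derivation of (\ref{concentrationComp}), i.e.\ in the sparse-but-far set $B_{2\varepsilon}^{\lt}(\beta^0)^c\cap\mathcal{P}$, and it is a genuine one. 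On the branch you label $E_1^c\cap E_2$, the Fubini recipe does \emph{not} pick up $\text{pr}_{\beta^0}(E_1^c)$: after dividing by the denominator bound, the quantity to control is $\mathbb{E}_{\beta^0}\bigl[{\bf 1}_{E_1^c}(y)\,f(y\vert\beta)/f(y\vert\beta^0)\bigr]=\text{pr}_\beta(E_1^c)$, the probability under the parameter $\beta$, not under $\beta^0$. You cannot instead bound the integrand by $\Pi(\cdot)\le 1$ pointwise and factor out $\text{pr}_{\beta^0}(E_1^c)$, because the likelihood ratio is unbounded and is largest precisely on draws of $y$ that favor $\beta$, which is where $E_1^c$ tends to occur. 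Worse, for $\beta\in\mathcal{P}_{S,R}$ with $\Vert\beta-\beta^0\Vert_\lt>2\varepsilon$ one has $\text{pr}_\beta(E_1^c)\approx 1$ (under $\text{pr}_\beta$ the Dantzig selector concentrates near $\beta$, far from $\beta^0$), so this branch only yields $p^{1+\tau}/\Pi(B_{C_\tau}^{\lw}(\beta^0))$ with no $p^{-\alpha}$ gain --- not (\ref{concentrationComp}). Your other branch, on $E_1\cap E_2$, is also not a proof as stated: ``$f(y\vert\beta)$ is exponentially smaller than $f(y\vert\widehat\beta)$'' is unquantified, is anchored at $f(y\vert\widehat\beta)$ while your denominator bound is anchored at $f(y\vert\beta^0)$, and would require RIP/ROC control of $\Vert X(\beta-\widehat\beta)\Vert_\lt$ even though $\widehat\beta$ is not sparse, so the cone arithmetic you gesture at is nontrivial and, as it turns out, unnecessary.

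The repair is to swap the roles of $E_1$ and $E_1^c$ on the compressible-far piece, which is exactly the paper's Schwartz-type argument: keep ${\bf 1}_{E_1}(y)$ (the paper's $1-\Phi$) attached to that numerator, change measure to get $\int_{\mathcal{B}\cap\mathcal{P}}\text{pr}_\beta(E_1)\,d\Pi(\beta)$, and observe that for $\beta\in\mathcal{P}_{S,R}$ with $\Vert\beta-\beta^0\Vert_\lt>2\varepsilon$ the triangle inequality gives $\text{pr}_\beta(E_1)=\text{pr}_\beta(\Vert\widehat\beta-\beta^0\Vert_\lt\le\varepsilon)\le\text{pr}_\beta(\Vert\widehat\beta-\beta\Vert_\lt>\varepsilon)\le\frac{1}{p^\alpha\sqrt{\pi\log p}}$ by Theorem \ref{dantzigthm} applied with true parameter $\beta$ --- this is the type II error bound of Lemma \ref{hyptest}, and it is the only place the restriction to $\mathcal{P}$ is used. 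Multiplying by $p^{1+\tau}/\Pi(B_{C_\tau}^{\lw}(\beta^0))$ then produces (\ref{concentrationComp}), while the non-compressible remainder is bounded exactly as you did for (\ref{sparseComp}); no likelihood-domination argument around $\widehat\beta$ is needed.
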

%This bound is quite technical, so we shall elucidate the contributions of each term in the inequality in Theorem \ref{mainthm}. 

We now describe the contributions of each of the terms in the inequality in Theorem \ref{mainthm}, noting first that the terms (\ref{fixedComp}) and (\ref{matrixComp}) are independent of $\Pi$. The term in (\ref{fixedComp}) comes from using the Dantzig estimator in our hypothesis test within the proof. As such, we have little control over this term aside from adjusting the parameter $\alpha$.  The fourth term, (\ref{matrixComp}) is controlled by the parameter $\tau$, and arises when we put a bound on the noise. When $p$ is very large, the net contribution of these terms is insignificant.

Having discussed the terms that are independent of the prior, we turn our attention to the middle terms. The term (\ref{concentrationComp}) depends inversely upon $\Pi(B_{C_\tau}^{\lw}(\beta^0))$, the probability the prior assigns to a small ball around $\beta^0$. The behavior of this term illustrates the role that sparsity plays in the behavior of the posterior. In order to control this term, we must increase $\alpha$. However, if $\Pi(B_{C_\tau}^{\lw}(\beta^0))$ is proportional to the volume of $B_{C_\tau}^{\lw}(\beta^0)\subset\mathbb{R}^p$, then $\alpha$ must overcome $p$, and
\[
\varepsilon\propto \sqrt{\frac{S(1+Cp)\log p}{n}},
\]
may be quite large. This would mean that asymptotic contraction is not feasible. On the other hand, a sparsity promoting prior can lead to posterior concentration. Because a sparsity promoting prior is concentrated very near $S$-dimensional subspaces, the probability assigned to a small ball around a sparse vector is proportional to the volume of a ball in $\mathbb{R}^S$. Thus, $\alpha$ can remain $O(S)$, and $\varepsilon$ shrinks asymptotically if $S^2\log p$ is $o(n)$.

Finally, we discuss the term (\ref{sparseComp}). It is clear that this term can only be controlled if the prior encourages sparse $\beta$.  In particular, if $\Pi$ is a compressible prior (see \cite{Cev09, GCD11}), $\Pi(\mathcal{B}\setminus\mathcal{P})$ should be small. In general, the decay of $\Pi(B_{2\varepsilon}^{\lt}(\beta^0)^c\setminus\mathcal{P})$ must overcome the growth of a $p^S$ term produced by $\Pi(B_{C_\tau}^{\lw}(\beta^0))$. 

\subsection{Statistical power calculations for model uncertainty}

For a statistical power calculation of a credible region, a statistician specifies
\begin{enumerate}
\item a $\xi\in(0,1)$ that controls the significance of the region,
\item a $\rho\in(0,1)$ such that the posterior concentration on the region is at least $1-\rho$ with probability at least $1-\xi$ on the draw of $y$,
\item an $r>0$ such that the radius of the region is at most $r$,
\end{enumerate}
and then proceeds to compute the minimal sample size $n$ such that the above constraints are all satisfied. In the $n\ll p$ scenario, we must relax the constraint on $r$, and minimize a cost function $Q(r,n)$ over the set of all $(r,n)$ such that the posterior concentration on the ball of radius $r$ is at least $1-\rho$ with probability $1-\xi$. The function $Q$ is any function such that $Q(r,n)\leq Q(r^\prime, n^\prime)$ if $r\leq r^\prime$ and $n\leq n^\prime$.

Applying Markov's inequality to the bound in Theorem \ref{mainthm}, we obtain
\begin{eqnarray*}
\rho\cdot pr_{\beta^0}\{\Pi(B_r^{\lt}(\beta^0)^c\vert y)>\rho\}&\leq& \frac{1}{p^\alpha\sqrt{\pi\log p}}\left(1+\frac{p^{1+\tau}}{\Pi(B_{C_\tau}^{\lw}(\beta^0)^c)}\right)\\
&&+\frac{p^{1+\tau}\Pi(B_r^{\lt}(\beta^0)^c\setminus\mathcal{P})}{\Pi(P_{C_\tau}^{\lw}(\beta^0))}\\
&&+\frac{1}{p^{\tau}\sqrt{\pi\log p}}\\
\end{eqnarray*}
for
\begin{eqnarray*}
r=2\left(\frac{8\sigma}{1-\delta-\theta}\sqrt{\frac{(1+\alpha)S\log p}{n}}+2\frac{1-\delta+\theta}{1-\delta-\theta}\frac{R}{\sqrt{S}}\right).
\end{eqnarray*}
Since $p^{-\alpha}$ and $\Pi(B_r^{\lt}(\beta^0)^c\setminus\mathcal{P})$ can be made arbitrarily small for a large enough choice of $\alpha$, for any fixed $n$ and $\tau$ such that
\[
\frac{\rho^{-1}}{p^\tau\sqrt{\pi\log p}}<\xi
\]
there is an $\alpha$ (and hence an $r$) such that $pr_\beta\{\Pi(B_r^{\lt}(\beta^0)^c)>\rho\}<\xi$. Once we have estimates for $\Pi(B_r^{\lt}(\beta^0)^c\setminus\mathcal{P})$ and $\Pi(B_{C_\tau}^{\lw}(\beta^0))$ in terms of $\alpha$ and $\tau$, this indicates that we may optimize and get the smallest $\alpha$ (and hence the smallest $r$). After we have fixed $\alpha$ and $n$ subject to the above constraints, the posterior concentration on $B_{4\varepsilon}^{\lt}(\widehat{\beta})$ is then greater than $1-\rho$ with probability exceeding $1-\xi-\frac{1}{p^\alpha\sqrt{\pi\log p}}$.

Just as in the theory of compressed sensing, Theorem \ref{mainthm} requires prior information about the sparsity level $S$. Unlike compressed sensing, Theorem \ref{mainthm} also requires an estimate for the prior probability on the ball $B_{C_\tau}^{\lw}(\beta^0)$ and on the set $B_{2\varepsilon}^{\lt}(\beta^0)^c\setminus\mathcal{P}$ for all $\beta^0$ of interest. Such bounds are generally available if we have bounds (for absolutely continuous $\Pi$ with density function $\Pi(\beta)$) on $\Pi(\beta^0)$ since we may always use the estimate
\[
\inf_{\beta\in U} \Pi(\beta)\text{Vol}(U)\leq \Pi(U)\leq \sup_{\beta\in U} \Pi(\beta)\text{Vol}(U)
\]
for any measurable $U$, where $\text{Vol}(U)$ is the Lebesgue measure of the set $U$. Since we must observe several examples before we can conclude the sparsity of the class of examples, it is simple to simultaneously determine the set of $\beta$ to determine bounds on $\Pi(\beta)$ over the entire class.

\subsection{Sharpening Theorem \ref{mainthm}}

The proof of Theorem \ref{mainthm} employs numerous inequalities, and it is instructive to determine which are tight. The most important estimate to examine is the lower bound of the normalization constant (the denominator in Bayes's Theorem). The proof of Theorem \ref{mainthm} employs Markov's inequality to bound the expression
\[
\int \exp\left\{-\frac{1}{2\sigma^2}\left [\Vert X\beta-y\Vert_{\lt}^2-\Vert X\beta^0-y\Vert_{\lt}^2\right]\right\}d\Pi(\beta)
\]
through the reduction
\begin{eqnarray*}
\Vert X\beta-y\Vert_{\lt}^2-\Vert X\beta^0-y\Vert_{\lt}^2&=&\langle X(\beta-\beta^0),X(\beta-\beta^0)\rangle -2\langle X(\beta-\beta^0),y-X\beta^0\rangle\\
&=&\langle X(\beta-\beta^0),X(\beta-\beta^0)\rangle -2\langle X(\beta-\beta^0),e\rangle.
\end{eqnarray*}
Because $n<p$, $X$ has a nontrivial kernel, $X(\beta-\beta^0)=XQ(\beta-\beta^0)$ where $Q$ is the orthogonal projection onto the cokernel of $X$. By replacing $\beta-\beta^0$ with $Q(\beta-\beta^0)$ in the equations (4.54) to (4.58), we see that we may replace $\Pi(B_{C_\tau}^{\lw}(\beta^0))$ with $\Pi(\mathcal{C})$ where
\[
\mathcal{C}=\{\beta\in\mathbb{R}^p:\Vert Q(\beta-\beta^0)\Vert_{\lw}\leq C_\tau\}.
\]
This set is essentially cylindrical, and represents a much larger proportion of the domain than the metric ball. Moreover, the prior probability on this set is proportional to the volume on an $n$-dimensional metric ball. The dependence of $\mathcal{C}$ on $X$ brings additional complexities to the concentration calculations, so it is generally more straightforward to compute using $\Pi(B_{C_\tau}^{\lw}(\beta^0))$. This is especially true when we consider asymptotic calculations.

Now, the useful design matrices that we have examined generally have nonzero singular values which are $O(\sqrt{p})$. This means that, in high dimensions the likelihood term  $e^{-\frac{1}{2\sigma^2}\Vert X\beta-y\Vert_{\lt}^2}$ is highly concentrated around the affine subspace $\{\beta\in\mathbb{R}^p:X\beta=y\}$. Consequently, the concentration behavior for an absolutely continuous prior $\Pi$ is generally dictated by the singular distribution
\[
\frac{\Pi(\beta)\delta_{X\beta=y}(\beta)}{\displaystyle\int \Pi(\beta)\delta_{X\beta=y}(\beta)d\beta}
\]
where $\Pi(\beta)$ is the Radon-Nikodym derivative of $\Pi$ with respect to Lebesgue measure, and $\delta_{X\beta=y}$ is the pushforward measure\footnote{If $f$ is a measurable map between two measure spaces $\mathcal{X}$ and $\mathcal{Y}$, and $\mu$ is a probability measure on $\mathcal{X}$, the {\it pushforward measure} of $\mu$ under $f$, $\mu_f$, is the measure satisfying $\mu_f(U)=\mu(f^{-1}(U))$ for all measurable $U\subset\mathcal{Y}$.} of $(p-n)$-dimensional Lebesgue measure under isometric identification with the affine subspace $\{\beta\in\mathbb{R}^p: X\beta=y\}$. With tight characterizations of these distributions for general $X$ and $y$, it is in principle possible to obtain nearly optimal concentration bounds. The difficulty with this ideal is that the collection of these distributions for an arbitrary prior $\Pi$ introduces additional hypotheses on $\Pi$ which are complicated to verify. From this perspective, Theorem \ref{mainthm} may be seen as an intermediate result which trades complexity for precision, and Theorem \ref{mainthm} is still able to produce reasonable bounds for priors that concentrate on very low dimensional subspaces of $\mathbb{R}^p$. 

Given a sharp estimate for the normalization constant of the posterior, we can obtain much better bounds than are available via Markov's inequality. As an example of the precision that can be obtained in special cases, we consider a brute-force analysis of the concentration for the sparsity($S$)-Gaussian prior.

%% UG prior concentration
%\begin{thm}\label{UG}
%Assume (A1) through (A4) and that $\Pi$ is the sparsity($S$)-Gaussian prior with parameters $S$ and $V$. Further assume that $\Vert\beta^0\Vert_{\ell_\infty}\leq C<\infty$. Fix $\alpha>\max\left\{0,\frac{-29-32\theta+31\delta}{30+32\theta-32\delta}\right\}$ and let
%\begin{eqnarray}
%\varepsilon&=&\frac{C\sigma^2\sqrt{S}/n+(C_2\sigma V^2+C_3\sigma^2/n)\sqrt\frac{(1+\alpha)S\log p}{n}}{(1-\delta)V^2+\sigma^2/n},
%\end{eqnarray}
%where the positive constants $C_2$ and $C_3$ depend only $\delta$ and $\theta$.
%If $(n\frac{1-\delta}{\sigma^2}+\frac{1}{V^2})\varepsilon^2\geq S/2$, then there exists a constant $\eta = \eta(\alpha, \delta, \theta)>0$ so that
%\begin{equation}
%\Pi(B_{2\varepsilon}^{\lt}(\beta^0)\vert y)\geq\frac{1-e^{-\frac{1}{4}(n\frac{1-\delta}{\sigma^2}+\frac{1}{V^2})\varepsilon^2}}{1+\left(e^2\frac{n(1+\delta)V^2+\sigma^2}{n(1-\delta)V^2+\sigma^2}\right)^{S/2}e^{\frac{1}{n(1-\delta)V^2+\sigma^2}\frac{\Vert y\Vert_{\lt}^2}{2}}S^{-S}p^{-\eta S}}
%\end{equation}
%with probability greater than $1-1/p^{\alpha}\sqrt{\pi\log p}$ on the draw of $y$. 
%\end{thm}

\begin{thm}\label{UG}
Assume (A1) through (A4) and that $\Pi$ is the sparsity($S$)-Gaussian prior with parameters $S$ and $V$. Further assume that $\Vert\beta^0\Vert_{\ell_\infty}\leq C<\infty$. Fix $\alpha>0$, suppose $\delta < 29/31$,  
and let
\begin{eqnarray}
\varepsilon&=&\frac{C\sigma^2\sqrt{S}/n+(C_2\sigma V^2+C_3\sigma^2/n)\sqrt\frac{(1+\alpha)S\log p}{n}}{(1-\delta)V^2+\sigma^2/n},
\end{eqnarray}
where the positive constants $C_2$ and $C_3$ depend only $\delta$ and $\theta$.
If $(n\frac{1-\delta}{\sigma^2}+\frac{1}{V^2})\varepsilon^2\geq S/2$, then there exists a constant $\eta = \eta(\alpha, \delta, \theta)>0$ so that
\begin{equation}
\Pi(B_{2\varepsilon}^{\lt}(\beta^0)\vert y)\geq\frac{1-e^{-\frac{1}{4}(n\frac{1-\delta}{\sigma^2}+\frac{1}{V^2})\varepsilon^2}}{1+\left(e^2\frac{n(1+\delta)V^2+\sigma^2}{n(1-\delta)V^2+\sigma^2}\right)^{S/2}e^{\frac{1}{n(1-\delta)V^2+\sigma^2}\frac{\Vert y\Vert_{\lt}^2}{2}}S^{-S}p^{-\eta S}}
\end{equation}
with probability greater than $1-1/p^{\alpha}\sqrt{\pi\log p}$ on the draw of $y$. 
\end{thm}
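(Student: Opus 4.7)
The plan exploits the mixture structure of the sparsity($S$)-Gaussian prior: $\Pi = \binom{p}{S}^{-1}\sum_T \Pi_T$, where $\Pi_T$ is the Gaussian slab $\mathcal{N}(0, V^2 I_S)$ on coordinates $T$ (and zero elsewhere). The posterior inherits this decomposition, and by retaining only the true-support summand $T^0 = \mathrm{supp}(\beta^0)$ in the numerator one obtains
\[
\Pi(B_{2\varepsilon}^{\lt}(\beta^0)\vert y) \geq \frac{\Pi_{T^0}(B_{2\varepsilon}^{\lt}(\beta^0)\vert y)}{1 + \sum_{T \neq T^0} m_T(y)/m_{T^0}(y)}, \quad m_T(y) = \int f(y\vert\beta)\,d\Pi_T(\beta).
\]
The proof splits into lower bounding the numerator (Gaussian concentration on the correct support) and upper bounding the ``leakage'' sum in the denominator.

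For the numerator, the conditional posterior given $T^0$ is exactly Gaussian, $\beta_{T^0}\vert y \sim \mathcal{N}(\mu_{T^0},\Sigma_{T^0})$ with $\Sigma_{T^0}^{-1} = V^{-2} I_S + \sigma^{-2} X_{T^0}^T X_{T^0}$ and $\mu_{T^0} = \sigma^{-2}\Sigma_{T^0} X_{T^0}^T y$. Under (A4) the spectrum of $\Sigma_{T^0}^{-1}$ lies in $[V^{-2} + n(1-\delta)/\sigma^2,\, V^{-2} + n(1+\delta)/\sigma^2]$. Decomposing the posterior-mean error
\[
\mu_{T^0} - \beta^0_{T^0} = -\Sigma_{T^0} V^{-2}\beta^0_{T^0} + \sigma^{-2}\Sigma_{T^0} X_{T^0}^T e,
\]
the shrinkage bias is controlled by $\Vert\beta^0\Vert_{\ell_\infty}\leq C$ (yielding the $C\sigma^2\sqrt{S}/n$ contribution to $\varepsilon$), and the noise term by the event $\Vert\widetilde{X}^T e\Vert_\linf \leq \sigma\sqrt{2(1+\alpha)\log p}$ from Theorem \ref{dantzigthm} (which holds with probability $1-1/(p^\alpha\sqrt{\pi\log p})$). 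Together these give $\Vert\mu_{T^0}-\beta^0\Vert_\lt \leq \varepsilon$. Hence $B_\varepsilon^{\lt}(\mu_{T^0}) \subseteq B_{2\varepsilon}^{\lt}(\beta^0)$, and a Gaussian tail bound---valid precisely because the assumed $(n(1-\delta)/\sigma^2 + 1/V^2)\varepsilon^2 \geq S/2$ pushes the radius past the typical posterior spread---yields the numerator $1-\exp(-\tfrac14(n(1-\delta)/\sigma^2 + 1/V^2)\varepsilon^2)$.

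For the denominator sum, $m_T(y)$ is the density at $y$ of $\mathcal{N}(0, V^2 X_T X_T^T + \sigma^2 I_n)$. By RIP this covariance has $n-S$ eigenvalues equal to $\sigma^2$ and $S$ eigenvalues in $[V^2 n(1-\delta)+\sigma^2,\, V^2 n(1+\delta)+\sigma^2]$, producing a determinant ratio bound $((V^2 n(1+\delta)+\sigma^2)/(V^2 n(1-\delta)+\sigma^2))^{S/2}$. For the quadratic-form difference $\tfrac12 y^T[(\Sigma_{T^0}^{(y)})^{-1} - (\Sigma_T^{(y)})^{-1}]y$, the Woodbury identity cancels the common $\sigma^{-2} I_n$ piece and restricts attention to the $2S$-dim subspace $\mathrm{span}(X_T) + \mathrm{span}(X_{T^0})$; invoking $\delta+\theta<1$ there yields the exponential factor $\exp(\Vert y\Vert_\lt^2/(2(n(1-\delta)V^2 + \sigma^2)))$. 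Summing over $\binom{p}{S}\leq(ep/S)^S$ non-true supports and choosing $\eta>0$ so as to absorb the combinatorial factor into the polynomial surplus $p^{-\eta S}$ completes the denominator bound.

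The main obstacle is the exponential factor in the denominator: the naive estimate using $\lambda_{\min}(\Sigma_{T^0}^{(y)}) = \sigma^2$ produces only the far weaker $\exp(\Vert y\Vert_\lt^2/(2\sigma^2))$, so the Woodbury-based low-rank argument is essential, and a support-overlap refinement (splitting the sum by $|T\cap T^0|$ and controlling each stratum with a sharper RIP-based ratio bound) is what squeezes out the $p^{-\eta S}$ slack. The threshold $\delta<29/31$ arises from the arithmetic of this step---it is the regime in which, after combining the RIP-induced spectral gaps with the combinatorial factor $\binom{p}{S}$, a strictly positive $\eta = \eta(\alpha,\delta,\theta)$ survives.
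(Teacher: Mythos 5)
There is a genuine gap, and it lies in your very first step: retaining only the single true-support summand $T^0$ in the numerator and pushing \emph{every} other support into the ``leakage'' sum $\sum_{T\neq T^0} m_T(y)/m_{T^0}(y)$. For the theorem's bound to follow from your decomposition, that entire sum would have to be of order $p^{-\eta S}$ (up to the determinant and $e^{\Vert y\Vert_{\ell_2}^2/(2(n(1-\delta)V^2+\sigma^2))}$ factors), and this is false in general. A support $T$ that misses only a negligible amount of signal energy --- e.g.\ when some entries of $\beta^0$ are zero or of size $o(\sqrt{\log p/n})$, which is allowed since (A2) only requires $\Vert\beta^0\Vert_{\ell_0}\leq S$ and $\Vert\beta^0\Vert_{\ell_\infty}\leq C$ --- has $m_T(y)/m_{T^0}(y)$ of order one (indeed up to $p^{O(1+\alpha)}$ on the noise event, since swapping a near-null column for a spurious one changes the exponent only by noise terms of size $O((1+\alpha)\log p)$, and the determinant ratio between two same-size models is not polynomially small). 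There are polynomially many such supports, so no stratification by $\vert T\cap T^0\vert$ can rescue the claimed $p^{-\eta S}$ bound: within the high-overlap, small-missed-energy strata there is simply no suppression mechanism. This is not a technicality; the posterior genuinely spreads its mass over all such near-true supports, so a proof must count them on the numerator side rather than try to kill them in the denominator.

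This is exactly what the paper's proof does differently. It keeps in the numerator the whole set $G=\{\gamma\in\{0,1\}_S^p:\Vert\beta^0_{\gamma^0\setminus\gamma}\Vert_{\ell_2}\leq A\sqrt{(1+\alpha)S\log p/n}\}$ of supports capturing all but a Dantzig-scale amount of signal, shows (via the operator-approximation Lemma \ref{lem:projappx}, the noise event of Lemma \ref{lem:noise}, and $\Vert\beta^0_\gamma\Vert_{\ell_2}\leq C\sqrt{S}$) that for every $\gamma\in G$ the conditional Gaussian mean satisfies $\Vert\mu_\gamma-\beta^0\Vert_{\ell_2}\leq\varepsilon$, so each such component still puts mass $1-e^{-\frac14(n\frac{1-\delta}{\sigma^2}+\frac1{V^2})\varepsilon^2}$ inside $B_{2\varepsilon}^{\ell_2}(\beta^0)$; only the supports \emph{outside} $G$ are sent to the denominator, and for those the missed signal energy, through Lemmas \ref{lem:term1}, \ref{lem:term2} and \ref{lem:cleanup}, forces $y^T(P_{\gamma^\prime}-P_\gamma)y\leq -2\sigma^2(1+\eta)S\log p$, i.e.\ a $p^{-(1+\eta)S}$ suppression per term that beats the count $\binom{p}{S}\leq(ep/S)^S$ with surplus $p^{-\eta S}$. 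The condition $\delta<29/31$ arises precisely there, from requiring $\frac{A^2}{2}\bigl(1-\delta-\frac{\theta^2}{1-\delta}\bigr)-\frac{4\sigma^2}{1-\delta}>2\sigma^2$ with $A=\frac{8\sqrt2\,\sigma}{1-\delta-\theta}$, not from overlap arithmetic in a sum over all $T\neq T^0$. Your numerator analysis (shrinkage bias plus noise via the Gaussian tail, using $(n\frac{1-\delta}{\sigma^2}+\frac1{V^2})\varepsilon^2\geq S/2$) is sound and mirrors the paper's treatment of a single $\gamma\in G$; to repair the argument you must replace the $\{T^0\}$-versus-rest split by a $G$-versus-$G^c$ split (also noting that if $\vert\mathrm{supp}(\beta^0)\vert<S$ the support $T^0$ itself is not even in the prior's support set, another reason the comparison class must be supports dominating $\gamma^0$ rather than equal to it).
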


First, we note that we must invoke the additional assumption 
$$
\Vert\beta^0\Vert_{\ell_\infty}\leq C<\infty.
$$
While this is not a required assumption for the fidelity of compressed sensing point estimates, assumptions of this form arise when we are asking about the global behavior of the posterior. Finite sample concentration bounds for any posterior fundamentally depend upon the magnitude of $\beta^0$ because the concentration of the prior decays as the magnitude of $\beta^0$ increases. In any case, we may still estimate this quantity in a practical setting and the parameters $V$ and $n$ can be increased to ameliorate the effect of $C$.

Now, comparing this bound with the one that we obtain in Example \ref{SGexample} below, it is clear this theorem is sharper. In particular, note that we no longer need to scale $\alpha$ to obtain asymptotic contraction. A very crude approximation in the asymptotic regime would be $\varepsilon\approx \sqrt{\frac{S\log p}{n}}$ and
\begin{equation}
\Pi(B_{2\varepsilon}^{\lt}(\beta^0)\vert y)\approx \frac{1- Q_1p^{-\eta_1 S}}{1+Q_2p^{-\eta_2 S}}
\end{equation}
with probability exceeding $1-1/p^\alpha\sqrt{\pi\log p}$. In order to obtain contraction, we simply let $S\log p=o(n)$. This is as good a result as one might hope for, as (depending on $X$) $n$ must be at least $C S\log(p/S)$ to guarantee (A4) and the error for the compressed sensing estimators is generally controlled by $C \frac{S}{n}\log p$.  

\section{Asymptotic applications}

Based on Theorem \ref{mainthm}, we may exhibit a general asymptotic posterior contraction result depending upon $\Pi(B_{2\varepsilon}^{\lt}(\beta^0)^c\setminus\mathcal{P})$ and $\Pi(B_\tau^{\lw}(\beta^0))$. Whenever we consider a sequence of problems, $(S_n,n,p_n,X(n),\beta^0(n))$ with $n\rightarrow\infty$, we additionally assume that $\vert\beta_i^0(n)\vert\leq C<\infty$ for all $i$ and $n$ and that the constants $C$ and $\sigma$ remain fixed as $n$ increases. A certain level of control of $\Vert\beta^0(n)\Vert_{\lt}$ is an essential ingredient in any asymptotic analysis since it is a surrogate for the prior concentration around $\beta^0$, and contraction is an impossibility if this prior concentration is shrinking too quickly. We briefly recall the definition of posterior consistency to motivate Theorem \ref{asymptopia}. For a more extensive treatment of consistency, the interested reader is referred to the treatise of Ghosh and Ramamoorthi \cite{GR03}.

\begin{defn}
A sequence of posteriors $\Pi_n$ is said to be consistent for the sequence $(n,p_n,X(n),\beta^0(n))$ if 
\[
\Pi_n(B_r^{\lt}(\beta^0(n))\vert y_n)\longrightarrow 1
\]
almost surely on the draw of the sequence $y_n=X(n)\beta^0(n)+e_n$ for all fixed $r>0$.
\end{defn}

\begin{thm}\label{asymptopia}
Suppose $(S_n,n,p_n,X(n),\beta^0(n))$ is a sequence of problems satisfying (A1) through (A4), and that $\Pi_n$ is a sequence of priors on $\mathbb{R}^{p_n}$ such that
\begin{itemize}
\item[i.] $\Pi(\mathcal{D}_n)\geq p_n^{-\eta_n}$ 
\item[ii.] $\Pi(\mathcal{B}_n\setminus\mathcal{P})\leq p_n^{-\phi_n}$
\end{itemize}
for sequences of positive constants $\{\eta_n\}_{n\geq 1}$, $\{\phi_n\}_{n\geq 1}$, and where
\begin{equation}
\mathcal{D}_n=\left\{\beta\in\mathbb{R}^{p_n}:\Vert\beta-\beta^0(n)\Vert_{\lw}<\frac{1}{3}\sqrt{\frac{2\sigma^2(1+\alpha_n/2)\log p_n}{n}}\right\}.
\end{equation}
Let $\mathcal{B}_n=\{\beta\in\mathbb{R}^p:\Vert\beta-\beta^0(n)\Vert_\lt>2\varepsilon_n\}$ where
\begin{eqnarray}
\varepsilon_n = \frac{8\sigma}{1-\delta-\theta}\sqrt{\frac{(1+\alpha_n)S_n\log p_n}{n}}.
\end{eqnarray}
Then 
\begin{equation}
\mathbb{E}_{\beta^0}\Pi_n(\mathcal{B}_n\vert y)\leq p_n^{1+\alpha_n/2+\eta_n-\phi_n}+\left(p^{-\alpha_n/2}+p^{1+\eta_n}+1\right)\frac{1}{p^{\alpha_n/2}\sqrt{\pi\log p}},
\end{equation}
and hence we have posterior consistency if
\[
\alpha_n\geq 1+q,\: \alpha_n-2\eta_n-2\geq q,\text{ and }2\phi_n-\alpha_n-2\eta_n-2\geq q
\]
asymptotically for some fixed $q>0$.
\end{thm}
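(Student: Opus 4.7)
The plan is a direct application of Theorem \ref{mainthm} followed by a Borel--Cantelli-style argument. First I would specialize Theorem \ref{mainthm} by taking $\alpha = \alpha_n$, $\tau = \alpha_n/2$, and $R = 0$ (the latter is legitimate because assumption (A2) guarantees $\beta^0$ is exactly $S$-sparse). Under these choices, the ball $B_{C_\tau}^{\lw}(\beta^0(n))$ appearing in Theorem \ref{mainthm} coincides exactly with $\mathcal{D}_n$, since $C_{\alpha_n/2} = \tfrac{1}{3}\sqrt{2\sigma^2(1+\alpha_n/2)\log p_n / n}$; similarly, $B_{2\varepsilon}^{\lt}(\beta^0(n))^c$ is the set $\mathcal{B}_n$, because the $R$-dependent correction in $\varepsilon$ vanishes. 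The admissibility constraint $0 < 1 + \tau < \alpha$ from Theorem \ref{mainthm} becomes $\alpha_n > 2$, and this is automatic from the stated hypothesis $\alpha_n - 2\eta_n - 2 \geq q$ with $\eta_n \geq 0$.

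The main inequality is then produced by inserting the prior bounds (i) and (ii) into the four terms of Theorem \ref{mainthm}. Term (\ref{fixedComp}) is $p_n^{-\alpha_n}/\sqrt{\pi \log p_n}$; term (\ref{sparseComp}) is bounded above using both hypotheses by $p_n^{1+\alpha_n/2+\eta_n-\phi_n}$; term (\ref{concentrationComp}) is bounded using (i) by $p_n^{1+\eta_n}/(p_n^{\alpha_n/2}\sqrt{\pi \log p_n})$; and term (\ref{matrixComp}) is $1/(p_n^{\alpha_n/2}\sqrt{\pi \log p_n})$. Collecting the three terms sharing the denominator $p_n^{\alpha_n/2}\sqrt{\pi \log p_n}$ under a common parenthesis (noting that $p_n^{-\alpha_n} = p_n^{-\alpha_n/2} \cdot p_n^{-\alpha_n/2}$) yields precisely the stated bound on $\mathbb{E}_{\beta^0}\Pi_n(\mathcal{B}_n|y)$.

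For the consistency conclusion, I would match each exponent against the three stated asymptotic conditions: the inequality $2\phi_n - \alpha_n - 2\eta_n - 2 \geq q$ makes the $p_n^{1+\alpha_n/2+\eta_n-\phi_n}$ piece decay like $p_n^{-q/2}$; $\alpha_n - 2\eta_n - 2 \geq q$ handles the middle $p_n^{1+\eta_n-\alpha_n/2}$ piece; and $\alpha_n \geq 1 + q$ deals with the remaining prefactors. Markov's inequality applied to $\Pi_n(\mathcal{B}_n|y) \in [0,1]$ then converts the expectation bound into the tail bound $\mathrm{pr}_{\beta^0}(\Pi_n(\mathcal{B}_n|y) > \rho) = O(p_n^{-q/2})$ for each fixed $\rho > 0$, and the Borel--Cantelli lemma upgrades this to almost-sure convergence whenever $p_n$ grows fast enough for summability (which is the natural setting in the $p \gg n$ regime). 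Finally, since $\varepsilon_n \to 0$ under $S_n \log p_n = o(n)$, the inclusion $B_{2\varepsilon_n}^{\lt}(\beta^0(n)) \subseteq B_r^{\lt}(\beta^0(n))$ holds eventually for every fixed $r > 0$, matching the definition of consistency.

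The real substance is entirely in Theorem \ref{mainthm}; the main obstacle in the present result is simply the parameter bookkeeping, namely verifying that the admissibility constraint $1 + \alpha_n/2 < \alpha_n$ inherited from Theorem \ref{mainthm} is compatible with all three asymptotic decay conditions (with room to spare for the $q$-tightness). Once this compatibility is checked, the proof is a line-by-line substitution and a routine transfer from expectation bounds to almost-sure statements.
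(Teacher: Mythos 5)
Your proposal is correct and takes essentially the same route the paper intends: Theorem \ref{asymptopia} is just Theorem \ref{mainthm} specialized with $\alpha=\alpha_n$, $\tau=\alpha_n/2$, $R=0$ (so $C_\tau$ gives $\mathcal{D}_n$ and $\varepsilon$ gives $\varepsilon_n$), with the prior bounds (i)--(ii) substituted into the four terms, which is why the paper supplies no separate proof; your exponent bookkeeping and the Markov/Borel--Cantelli passage to almost-sure consistency match the intended argument. One small nitpick: to contain $\mathcal{B}_n^c$ in a fixed-radius ball you need $(1+\alpha_n)S_n\log p_n=o(n)$ rather than just $S_n\log p_n=o(n)$, since $\alpha_n$ may grow --- a point the paper itself only makes explicit in its examples.
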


We now examine a couple of cases where we may perform more precise calculations to obtain rates of asymptotic contraction.

% sparsity($S$)-Gaussian example
\begin{exmp}\label{SGexample}
First, we turn our attention to a case that admits the simplest (but still somewhat involved) analysis. This is the case of the sparsity($S$)-Gaussian prior. We let
$\{0,1\}_S^p$ denote the $p$-length binary sequences with exactly $S$ nonzero entries and fix the model
\begin{eqnarray}
\beta_i\sim \gamma_i\mathcal{N}(0,V^2)+(1-\gamma_i)\delta_0\\
\gamma\sim\text{Uniform}(\{0,1\}_S^p)
\end{eqnarray}
where $\text{Uniform}(\{0,1\}_S^p)$ is the distribution with equal ($1/\binom{p}{S}$) probability for all $\gamma\in\{0,1\}_S^p$. First, note that $\Pi(B_{2\varepsilon}^{\lt}(\beta^0)^c\setminus\mathcal{P}_S)=0$. This eliminates the term (\ref{sparseComp}) in Theorem \ref{mainthm} and we obtain
\begin{equation}
\mathbb{E}_{\beta^0}\Pi(B_{2\varepsilon}^{\lt}(\beta^0)\vert y)\leq\left(2+\frac{p^2}{\Pi(B_{C_1}^{\lw}(\beta^0))}\right)\frac{1}{p^\alpha\sqrt{\pi\log p}}\label{red1:1}
\end{equation}
when we set $\tau=1$. We now only need to estimate $\Pi(B_{C_\alpha}^{\lw}(\beta^0))$. To that end, suppose that $\beta^0$ has support $T$ \footnote{Here, we assume that $\vert T\vert = S$, but note that our following bounds are equally valid for $\beta^0$ with sparsity at most $S$.} and denote by $\text{Vol}(\mathcal{D})$ the volume of an $S$-dimensional $\lw$-ball with radius $C_1$. Let $M$ denote the minimum of $\prod_{i\in T}\mathcal{N}(\beta_i\vert 0,V^2)$ over $\mathcal{D}$.  Then
\begin{eqnarray}
\Pi(B_{C_1}^{\lw}(\beta^0))&\geq& M\text{Vol}(\mathcal{D})\Pi(\gamma={\bf 1}_T)=\binom{p}{S}^{-1}M\text{Vol}(\mathcal{D})\\
&\geq&\binom{p}{S}^{-1} (2\pi V^2)^{-S/2}e^{-\Vert\beta^0\Vert_\lt^2/2V^2} e^{-C_1^2/2V^2}\frac{(2C_1)^S}{\Gamma(1+S)}\\
&\geq&\frac{e^{-\Vert\beta^0\Vert_\lt^2/2V^2-C_1^2/2V^2}}{\sqrt{2\pi (eV)^2}^S}\left(\frac{2C_1}{p}\right)^S\label{red1:2}
\end{eqnarray}
and substitution for $C_1$ yields
\begin{eqnarray}
\Pi(B_{C_1}^{\lw}(\beta^0))&\geq&e^{-C_1^2/2V^2}\left(\frac{2\sigma}{3e^{C^2/2V^2}\sqrt{\pi (eV)^2}}\right)^S\left(\frac{1}{p}\sqrt{\frac{\log p}{n}}\right)^S\nonumber\\
&=&\eta_0(\eta_1)^S\left(\frac{1}{p}\sqrt{\frac{\log p}{n}}\right)^S\label{red1:4}
\end{eqnarray}
where we have set $\eta_0=e^{-C_{\nu,\kappa}^2/2V^2}$ and $\eta_1=e^{-C^2/2V^2}\sqrt{\frac{2\sigma^2}{\pi e^2 V^2}}$.
Note that $\eta_1$ is constant, and (though it  depends on $n, p,$ and $\alpha$) $\eta_0$ is approximately constant in the asymptotic regime. Combining (\ref{red1:4}) with (\ref{red1:1}), we have the bound
\begin{equation}
\mathbb{E}_{\beta^0}\Pi(\mathcal{B}\vert y)\leq\left(2+\frac{p^2}{\eta_0}\left(\frac{p}{\eta_1}\sqrt{\frac{n}{\log p}}\right)^S\right)\frac{1}{p^\alpha\sqrt{\pi\log p}}.\label{red1:5}
\end{equation}

Now, consider a sequence of problems $(S_n,n,p_n,X(n),\beta^0(n))$ satisfying (A1) through (A4), and suppose we employ the sparsity($S$)-Gaussian prior with parameter $V$ fixed for each $n$. Then the bound in (\ref{red1:5}) applies to $\mathbb{E}_{\beta^0(n)}\Pi(\mathcal{B}_n\vert y(n))$ for each $n$, where the radius of $\mathcal{B}_n$ is $2\varepsilon_n$ with
\begin{eqnarray}
\varepsilon_n = \frac{8\sigma}{1-\delta-\theta}\sqrt{\frac{(1+\alpha_n)S_n\log p_n}{n}}.
\end{eqnarray}
The most problematic contribution to the bound in $(\ref{red1:5})$ is $p_n^{S_n}$, but we may adjust $\alpha_n$ so that $p_n^{\alpha_n}$ overcomes this term asymptotically.  Thus, in order to obtain asymptotic consistency, we require $\alpha_n-S_n\rightarrow\infty$ and $(1+\alpha_n)S_n\log p_n=o(n)$. This is possible if we set $\alpha_n=S_n\log p_n$ and assume $S_n\log p_n=o(\sqrt{n})$. Comparing this with the rate implied by Theorem \ref{UG}, we see that the sharper analysis gives us $S_n\log p_n = o(n)$.
\end{exmp}

% Benoulli-Gaussian
\begin{exmp}\label{BGexample}
Now, we examine the case where $\Pi$ follows the Bernoulli-Gaussian model,
\begin{eqnarray}
\beta_i\sim \gamma_i\mathcal{N}(0,V^2)+(1-\gamma_i)\delta_0\\
\gamma_i\sim\text{Bernoulli}(\phi)
\end{eqnarray}
where $\phi\in(0,1)$ controls the sparsity of the prior. We assume that $\beta^0$ is $K$-sparse and that $p\phi=K$. By Chernoff-Hoeffding, we have that
\begin{equation}
\text{pr}\left\{\sum\gamma_i\geq S\right\}\leq \left(\frac{K}{S}\right)^{S}\left(\frac{p-K}{p-S}\right)^{p-S}.
\end{equation}
Note that this is a bound for $\Pi(B_{2\varepsilon}^{\lt}(\beta^0)^c\setminus\mathcal{P}_{S,0})$. We are left with producing an estimate for $\Pi(B_{C_1}^{\lw}(\beta^0))$:
\begin{eqnarray}
\Pi(B_{C_1}^{\lw}(\beta^0))&=& \sum_{\gamma}\Pi(B_{C_1}^{\lw}(\beta^0)\vert\gamma)\Pi(\gamma)\\
&=&\sum_{k=0}^{p-K}\binom{p-K}{k}\phi^{K+k}(1-\phi)^{p-K-k}\Pi(B_{C_1}^{\lw}(\beta^0)\vert\gamma)\\
&\geq&\phi^K\sum_{k=0}^{p-K}\binom{p-K}{k}\phi^k(1-\phi)^{p-K-k}\frac{e^{-\frac{\Vert\beta^0\Vert_\lt^2}{2V^2}-\frac{C_1^2}{2V^2}}}{\sqrt{2\pi V^2}^{K+k}}\frac{(2C_1)^{K+k}}{\Gamma(1+K+k)}\nonumber\\
&\geq&\eta_0\left(\frac{2\phi C_1}{e^{C^2/2V^2}\sqrt{2\pi V^2}}\right)^K\sum_{k=0}^{p-K}\binom{p-K}{k}\left(\frac{2\phi C_1}{\sqrt{2\pi V^2}}\right)^k(1-\phi)^{p-K-k}\frac{1}{(K+k)!}\nonumber\\
&\geq&\eta_0\left(\frac{2\phi C_1}{K\sqrt{2\pi V^2}}\right)^K\sum_{k=0}^{p-K}\binom{p-K}{k}\left(\frac{2\phi C_1}{p\sqrt{2\pi V^2}}\right)^k(1-\phi)^{p-K-k}\nonumber\\
&=&\eta_0\left(\frac{\eta_1}{p}\frac{S}{K}\sqrt{\frac{\log p}{n}}\right)^K\left(1-\frac{K}{p}+\eta_1\frac{K}{p^2}\sqrt{\frac{\log p}{n}}\right)^{p-K}.
\end{eqnarray}
Here, $\eta_0$ and $\eta_1$ are similar to their counterparts in the previous example. In this case, the term (\ref{sparseComp}) in Theorem \ref{mainthm} is bounded by
\begin{equation}
\frac{p}{\eta_0}\left(\frac{K}{S}\right)^{S+K}\left(\frac{p}{\eta_1}\sqrt{\frac{\log p}{n}}\right)^K\left(\frac{p-K}{p-S}\right)^{p-S}\left(1-\frac{K}{p}+\eta_1\frac{K}{p^2}\sqrt{\frac{\log p}{n}}\right)^{K-p}.
\end{equation}
\normalsize 
Now, consider a sequence of problems $(K_n,n,p_n,X(n),\beta^0(n))$ satisfying (A1) through (A4), and suppose we employ the Bernoulli-Gaussian prior with parameters $V$ and $\phi_n=K_n/p_n$ for each $n$. In order to handle the term (\ref{concentrationComp}), we need to choose $\alpha_n$ so that $\alpha_n-K_n\rightarrow\infty$. Thus, we set $\alpha_n=K_n\log p_n$. In order to deal with the term (\ref{sparseComp}), we require $S_n-K_n\log p_n\rightarrow\infty$. Finally, to shrink the radius of $\mathcal{B}_n$, which is twice
\begin{eqnarray}
\varepsilon_n = \frac{8\sigma}{1-\delta-\theta}\sqrt{\frac{(1+\alpha_n)S_n\log p_n}{n}},
\end{eqnarray}
we may assume that $\alpha_n=K_n\log p_n$, $S_n=K_n\log^2 p_n$, and thus we need $K_n\log^2 p_n = o(\sqrt{n})$.
\end{exmp}

\subsection{Posterior contraction for absolutely continuous priors}
It is not too difficult to generalize these examples to demonstrate posterior contraction for priors with entries drawn from
\[
(1-\gamma_i)\delta_0(\beta_i)+\gamma_i\rho(\beta_i)
\]
for an arbitrary distribution $\rho$ as long as the hyperparameters $\gamma$ are sufficiently sparse and the small-ball probabilities are computable. On the other hand, the term
\[
\frac{p^{1+\tau}}{p^\alpha\sqrt{\pi\log p}\Pi(B_{C_\tau}^{\lw}(\beta^0))}
\]
is an obstruction to obtaining any contraction results for absolutely continuous priors. For example, our asymptotic framework is not applicable to the Laplace prior because the small-ball probability is proportional to the volume of a $p$-dimensional ball with radius approximately $\sqrt{\log(p)/n}$. Thus, $p^\alpha$ must grow faster than $n^{p/2}$, and hence $\alpha$ must be at least $\frac{p}{2\log n}$. The net result is that $\varepsilon$ is unbounded asymptotically. 

We conjecture that reasonable posterior contraction occurs for absolutely continuous priors with sufficient concentration on the compressible vectors. In particular, Markov's inequality provides a very poor bound for the normalization constant, and a sharper estimate should provide the correct framework for demonstrating the truth of this conjecture. It is an open problem to find general bounds for the normalization constant of any posterior -- especially bounds that can be computed efficiently from only the prior itself. Since our framework depends upon estimating the normalization constant of the posterior, the extension of our framework to absolutely continuous priors concentrating on the compressible vectors is left as an open problem.

\section*{Acknowledgements}
This work was partially funded by the Mathematics of Sensing, Exploitation, and Execution (MSEE) program (managed by Dr. Tony Falcone), by the National Science Foundation under grant DMS-1045153, and by grant R01ES17436 from the National Institute of Environmental Health Sciences (NIEHS) of the National Institutes of Health (NIH). The authors would also like to thank Mauro Maggioni for helpful discussions. We also thank the reviewers for their comments, which made this paper more complete.

%% Appendix A -- Schwartz Theorem technical material
\section*{Appendix A}

The proof of our main result is a modification of the argument originally devised by Schwartz \cite{Sch65}. In order to employ her strategy, we first find a large set of $y$'s for which the numerator of $\Pi(\beta\vert y)$ admits a controllable upper bound, and then we find another large set of $y$'s for which the denominator admits a controllable lower bound. 

In the literature, this former set is identified with a hypothesis test which enjoys strong consistency behavior. As is often the case, we may base this hypothesis test on a frequentist estimator, and our estimator of choice is the Dantzig selector and we employ Theorem \ref{dantzigthm} to exploit the theoretical properties of the Dantzig selector. The theoretical properties of the LASSO estimator \cite{CP07, Zhang2009} could also be exploited to form such a hypothesis test.

\begin{proof}[Proof of Theorem \ref{dantzigthm}]
Let $\beta=\sqrt{n}\beta^0$, set $h=\widetilde{\beta}-\beta$, and suppose $T_0$ and $T_{01}$ follow the precedent set in \cite{CT05}. First, we note that
\begin{equation}
\Vert h_{T_0^c}\Vert_\lw\leq\Vert h_{T_0}\Vert_\lw+2\Vert\beta_{T_0^c}\Vert_\lw.
\end{equation}
By Lemma 3.1 of \cite{CT05}, we then have
\begin{eqnarray}
\Vert h\Vert_\lt&\leq&\Vert h_{T_{01}}\Vert_\lt+S^{-1/2}\Vert h_{T_0^c}\Vert_\lw\\
&\leq &\Vert h_{T_{01}}\Vert_\lt+S^{-1/2}(\Vert h_{T_0}\Vert_\lw+2\Vert\beta_{T_0^c}\Vert_\lw)\\
&\leq &\Vert h_{T_{01}}\Vert_\lt+\Vert h_{T_0}\Vert_\lt+2S^{-1/2}\Vert\beta_{T_0^c}\Vert_\lw\\
&\leq &2\Vert h_{T_{01}}\Vert_\lt+2S^{-1/2}\Vert\beta_{T_0^c}\Vert_\lw.
\end{eqnarray}
Moreover, Lemma 3.1 also gives us
\begin{eqnarray}
\Vert h_{T_{01}}\Vert_\lt&\leq&\frac{1}{1-\delta}\Vert \widetilde{X}_{T_{01}}^T \widetilde{X} h\Vert_\lt+\frac{\theta}{1-\delta}S^{-1/2}\Vert\beta_{T_0^c}\Vert_\lw\\
&\leq&\frac{2\sqrt{2}}{1-\delta}S^{1/2}\lambda_p+\frac{\theta}{1-\delta}S^{-1/2}(\Vert h_{T_0}\Vert_\lw+2\Vert\beta_{T_0^c}\Vert_\lw)\\
&\leq&\frac{2\sqrt{2}}{1-\delta}S^{1/2}\lambda_p+\frac{\theta}{1-\delta}\Vert h_{T_0}\Vert_\lt+\frac{2\theta}{1-\delta}S^{-1/2}\Vert\beta_{T_0^c}\Vert_\lw
\end{eqnarray}
Manipulation of this last inequality yields
\begin{equation}
\Vert h_{T_{01}}\Vert_\lt\leq\frac{2\sqrt{2}}{1-\delta-\theta}\lambda_p+\frac{2\theta}{1-\delta-\theta}S^{-1/2}\Vert\beta_{T_0^c}\Vert_\lw
\end{equation}
Combining bounds, we arrive at
\begin{eqnarray}
\Vert h\Vert_\lt&\leq&\frac{4\sqrt{2}}{1-\delta-\theta}S^{1/2}\lambda_p+2\frac{1-\delta+\theta}{1-\delta-\theta}S^{-1/2}\Vert\beta_{T_0^c}\Vert_\lw\\
&=&\frac{4\sqrt{2}}{1-\delta-\theta}S^{1/2}\lambda_p+2\sqrt{n}\frac{1-\delta+\theta}{1-\delta-\theta}S^{-1/2}\Vert\beta_{T_0^c}^0\Vert_\lw\\
&\leq&\frac{4\sqrt{2}}{1-\delta-\theta}S^{1/2}\lambda_p+2\frac{1-\delta+\theta}{1-\delta-\theta}S^{-1/2}R
\end{eqnarray}
Scaling by $\sqrt{n}$ then yields the result.
\end{proof}

To simplify what follows, we set $\varepsilon$ equal to the bound in Theorem \ref{dantzigthm} and then define
\begin{eqnarray}
\mathcal{P}_{S,R}^\varepsilon=\{\beta\in\mathbb{R}^p:\Vert\beta-\beta^0\Vert_\lt>2\varepsilon\}\cap\mathcal{P}_{S,R},
\end{eqnarray}
which we shall denote as $\mathcal{P}$ when there is no possibility for ambiguity. We are now ready to define the set of $y$'s which produce controllable denominators, and we also prove the properties we shall exploit.

\begin{lem}\label{hyptest}
Define the critical region $\mathcal{C}=\{y\in\mathbb{R}^n:\Vert\hat{\beta}-\beta^0\Vert_\lt>\varepsilon\}$ and our hypothesis test is then $\Phi(y)={\bf 1}_\mathcal{C}(y)$. Then, 
\begin{enumerate}
\item $\displaystyle \mathbb{E}_{\beta^0}\Phi\leq \frac{1}{p^\alpha\sqrt{\pi\log p}}$
\item $\displaystyle \sup_{\beta\in\mathcal{P}}\mathbb{E}_\beta(1-\Phi)\leq\frac{1}{p^\alpha\sqrt{\pi\log p}}$
\end{enumerate}
\end{lem}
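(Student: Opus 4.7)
The plan is to derive both bounds as essentially immediate consequences of Theorem \ref{dantzigthm}, with the second part requiring only a reverse-triangle-inequality argument. Throughout I would rely on the fact that the tail bound in Theorem \ref{dantzigthm} is uniform over any ``true parameter'' lying in $\mathcal{P}_{S,R}$: inspecting the proof of that theorem, the only probabilistic input is the Gaussian tail bound controlling $\Vert \widetilde{X}^T e\Vert_{\linf}$, whose failure probability depends only on (A1) and the distribution of $e$, not on the particular sparse signal generating the data.

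For part 1, I would simply rewrite $\mathbb{E}_{\beta^0}\Phi = \text{pr}_{\beta^0}(\Vert\widehat{\beta} - \beta^0\Vert_\lt > \varepsilon)$. Since $\beta^0 \in \mathcal{P}_{S,R}$ by (A2), Theorem \ref{dantzigthm} immediately bounds this probability by $1/(p^\alpha\sqrt{\pi\log p})$, which is exactly the desired estimate.

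For part 2, I would fix an arbitrary $\beta \in \mathcal{P} = \mathcal{P}_{S,R}^\varepsilon$, so that $\beta \in \mathcal{P}_{S,R}$ while $\Vert \beta - \beta^0\Vert_\lt > 2\varepsilon$. The key observation is that under $\text{pr}_\beta$ the response is $y = X\beta + e$, so Theorem \ref{dantzigthm} applies with $\beta$ (not $\beta^0$) playing the role of the true parameter, yielding $\Vert \widehat{\beta} - \beta\Vert_\lt \leq \varepsilon$ outside an event of probability at most $1/(p^\alpha\sqrt{\pi\log p})$. On the complement, the reverse triangle inequality
\[
\Vert\widehat{\beta} - \beta^0\Vert_\lt \;\geq\; \Vert \beta - \beta^0\Vert_\lt - \Vert \widehat{\beta} - \beta\Vert_\lt \;>\; 2\varepsilon - \varepsilon = \varepsilon
\]
shows $y \in \mathcal{C}$, so $\Phi(y) = 1$ and $(1 - \Phi)(y) = 0$. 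Hence $\mathbb{E}_\beta(1 - \Phi)$ is bounded by the probability of the bad event, which is exactly $1/(p^\alpha\sqrt{\pi\log p})$. Since this bound is independent of the choice of $\beta\in\mathcal{P}$, the supremum claim follows.

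I do not anticipate a serious obstacle: the entire argument is a direct invocation of Theorem \ref{dantzigthm} together with the triangle inequality. The only conceptual point worth isolating, and arguably the most important one, is that under $\text{pr}_\beta$ the Dantzig selector is attempting to reconstruct $\beta$ rather than $\beta^0$, so that the separation $\Vert \beta - \beta^0\Vert_\lt > 2\varepsilon$ built into the definition of $\mathcal{P}_{S,R}^\varepsilon$ prevents $\widehat{\beta}$ from being simultaneously $\varepsilon$-close to both. This is precisely the mechanism that turns the single Dantzig tail bound into a two-sided hypothesis-testing statement.
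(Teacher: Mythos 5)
Your proposal is correct and follows essentially the same route as the paper's proof: part 1 is a direct invocation of Theorem \ref{dantzigthm} with $\beta^0\in\mathcal{P}_{S,R}$, and part 2 applies Theorem \ref{dantzigthm} under $\mathrm{pr}_\beta$ (with $\beta$ as the true parameter) together with the reverse triangle inequality and the separation $\Vert\beta-\beta^0\Vert_\lt>2\varepsilon$ built into $\mathcal{P}$. The uniformity over $\beta\in\mathcal{P}$ that you isolate explicitly is exactly what the paper uses implicitly in taking the supremum.
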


\begin{proof}
First, we bound the type I error rate for $\Phi$:

\begin{eqnarray}\mathbb{E}_{\beta^0}\Phi=\text{pr}_{\beta^0}(\mathcal{C})\leq\frac{1}{p^\alpha\sqrt{\pi\log p}}.\end{eqnarray}

\noindent In a similar fashion, we have

\begin{eqnarray}\sup_{\beta\in\mathcal{P}}\mathbb{E}_\beta(1-\Phi)=\sup_{\beta\in\mathcal{P}}\text{pr}_\beta\{\Vert\hat{\beta}-\beta^0\Vert_\lt\leq\varepsilon\}\end{eqnarray}

\noindent The reverse triangle inequality then yields the bound

\begin{eqnarray}\sup_{\beta\in\mathcal{P}}\mathbb{E}_\beta(1-\Phi)&\leq&\sup_{\beta\in\mathcal{P}}\text{pr}_\beta\{\Vert\hat{\beta}-\beta\Vert_\lt\geq-\varepsilon+\Vert\beta-\beta^0\Vert_\lt\}\\&\leq&\sup_{\beta\in\mathcal{P}}\text{pr}_\beta\{\Vert\hat{\beta}-\beta\Vert_\lt>\varepsilon\}\end{eqnarray}

\noindent where we have used the fact that $\Vert \beta-\beta^0\Vert_\lt>2\varepsilon$ for all $\beta\in\mathcal{P}$. Moreover, since $\beta\in\mathcal{P}$, $\text{pr}_\beta\{\Vert\hat{\beta}-\beta\Vert_\lt>\varepsilon\}\leq \frac{1}{p^\alpha\sqrt{\pi\log p}}$ by Theorem \ref{dantzigthm}, and we obtain the desired bound on the supremum. This completes the proof.
\end{proof}

Because of the behavior of the Dantzig selector, this hypothesis test is only useful for distinguishing between sparse vectors. That is, a large set of non-sparse vectors may trigger a type II error. While this may be a damning indictment for its utility as a practical hypothesis test, we merely employ $\Phi$ in the theoretical argument for our main theorem. Now, we shall prove the following more general result:

\begin{thm}\label{techthm}
Suppose $\beta^0\in\mathcal{P}=\mathcal{P}_{S,R}$ and that $\Pi$ is an arbitrary prior on $\mathbb{R}^p$.  Let $\mathcal{B}=\{\beta\in\mathbb{R}^p:\Vert\beta-\beta^0\Vert_\lt>2\varepsilon\}$, with 
\begin{eqnarray}
\varepsilon = \frac{8\sigma}{1-\delta-\theta}\sqrt{\frac{(1+\alpha)S\log p}{n}}+2\frac{1-\delta+\theta}{1-\delta-\theta}\frac{R}{\sqrt{S}},
\end{eqnarray}
and assume (A1), (A3) and (A4). For any $\alpha>0$, $\kappa>0$, $0<\nu<\alpha$, and all $u$ and $v$ satisfying $1/u+1/v=1$ with $u\geq 1$,
\begin{eqnarray}
\mathbb{E}_{\beta^0}\Pi(\mathcal{B}\vert y)&\leq& \frac{1}{p^\alpha\sqrt{\pi\log p}}\\
&&+\frac{\Pi(\mathcal{B}\setminus\mathcal{P})}{\Pi(\mathcal{D}_{\nu,\kappa})p^{-\nu}}\\
&&+\frac{1}{\Pi(\mathcal{D}_{\nu,\kappa})p^{\alpha-\nu}\sqrt{\pi\log p}}\\
&&+\text{pr}_{\beta^0}(\mathcal{A}_\kappa^c),
\end{eqnarray}
where 
\begin{eqnarray}
\mathcal{D}_{\nu,\kappa}=\left\{\beta\in\mathbb{R}^p:\Vert\beta-\beta^0\Vert_{\ell_u}<C_{\nu,\kappa}\right\},
\end{eqnarray}
\begin{equation}
C_{\nu,\kappa}=\left(\frac{\sqrt{2\Vert X^T X\Vert_{\ell_u\rightarrow\ell_v}\sigma^2\nu\log p}}{\kappa+\sqrt{\kappa^2+2\Vert X^T X\Vert_{\ell_u\rightarrow\ell_v}\sigma^2\nu\log p}}\right)\sqrt{\frac{2\sigma^2\nu\log p}{\Vert X^T X\Vert_{\ell_u\rightarrow\ell_v}}},
\end{equation}
and
\begin{equation}
\mathcal{A}_\kappa=\{y\in\mathbb{R}^n:\Vert X^T(y-X\beta^0)\Vert_{\ell_v}\leq\kappa\}.
\end{equation}
\end{thm}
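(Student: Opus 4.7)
The plan is to adapt Schwartz's strategy via the hypothesis test $\Phi$ from Lemma \ref{hyptest}. I decompose
\[
\mathbb{E}_{\beta^0}\Pi(\mathcal{B}\vert y) = \mathbb{E}_{\beta^0}[\Phi\cdot\Pi(\mathcal{B}\vert y)] + \mathbb{E}_{\beta^0}[(1-\Phi)\cdot\Pi(\mathcal{B}\vert y)];
\]
since $\Pi(\mathcal{B}\vert y) \le 1$, the first summand is at most $\mathbb{E}_{\beta^0}\Phi \leq p^{-\alpha}/\sqrt{\pi\log p}$ by Lemma \ref{hyptest}, yielding the first term of the bound. For the second summand I insert $\mathbf{1}_{\mathcal{A}_\kappa} + \mathbf{1}_{\mathcal{A}_\kappa^c}$ inside the expectation; the $\mathcal{A}_\kappa^c$ piece is bounded by $\text{pr}_{\beta^0}(\mathcal{A}_\kappa^c)$ since the integrand is at most $1$, producing the fourth term.

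Next I lower bound the normalization constant $D(y) := \int f(y\vert\beta)\,d\Pi(\beta)$ on $\mathcal{A}_\kappa$ by restricting the integral to $\mathcal{D}_{\nu,\kappa}$. The algebraic identity
\[
\Vert y-X\beta\Vert_{\lt}^2 - \Vert y-X\beta^0\Vert_{\lt}^2 = \Vert X(\beta-\beta^0)\Vert_{\lt}^2 - 2\langle\beta-\beta^0, X^T e\rangle
\]
combined with H\"older's inequality in $\ell_u/\ell_v$ duality and the definitions of $\mathcal{A}_\kappa$ and $\Vert X^TX\Vert_{\ell_u\to\ell_v}$ upper bounds the right-hand side by $\Vert X^TX\Vert_{\ell_u\to\ell_v}\,C_{\nu,\kappa}^2 + 2\kappa C_{\nu,\kappa}$ uniformly for $\beta\in\mathcal{D}_{\nu,\kappa}$. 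The stated formula for $C_{\nu,\kappa}$ is precisely the positive root (after rationalization) of the quadratic equation that makes this quantity equal to $2\sigma^2\nu\log p$; consequently $f(y\vert\beta) \geq p^{-\nu} f(y\vert\beta^0)$ on $\mathcal{A}_\kappa$ for every $\beta \in \mathcal{D}_{\nu,\kappa}$, and hence $D(y) \geq p^{-\nu} f(y\vert\beta^0)\,\Pi(\mathcal{D}_{\nu,\kappa})$.

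For the numerator $N(y)=\int_{\mathcal{B}}f(y\vert\beta)d\Pi(\beta)$ I split $\mathcal{B} = (\mathcal{B}\cap\mathcal{P}) \sqcup (\mathcal{B}\setminus\mathcal{P})$ so $N=N_1+N_2$. Fubini's theorem yields, for any nonnegative $g$,
\[
\mathbb{E}_{\beta^0}\!\left[g(y)\frac{N_i(y)}{f(y\vert\beta^0)}\right] = \int_{\mathcal{B}_i}\mathbb{E}_\beta[g(y)]\,d\Pi(\beta).
\]
Taking $g = 1-\Phi$ for $N_1$ and invoking the type II bound $\mathbb{E}_\beta(1-\Phi) \leq p^{-\alpha}/\sqrt{\pi\log p}$ from Lemma \ref{hyptest} (valid since $\mathcal{B}\cap\mathcal{P}\subset\mathcal{P}$), and taking $g = 1$ for $N_2$ with $\mathbb{E}_\beta[1] = 1$, then combining with the lower bound on $D(y)$ and bounding $\Pi(\mathcal{B}\cap\mathcal{P})\leq 1$, gives the second and third terms of the theorem.

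The hard part will be calibrating $C_{\nu,\kappa}$: the quadratic form $\Vert X(\beta-\beta^0)\Vert_{\lt}^2$ and the linear noise form $\langle\beta-\beta^0, X^T e\rangle$ must simultaneously be controlled using a single $\ell_u$ radius via the same $\ell_u/\ell_v$ H\"older duality, after which the resulting quadratic inequality in $C_{\nu,\kappa}$ must be solved and rationalized to produce the displayed closed form.
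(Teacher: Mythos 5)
Your proposal is correct and follows essentially the same route as the paper's proof: the Schwartz-style split using the Dantzig-selector test $\Phi$ and the noise event $\mathcal{A}_\kappa$, the H\"older/$\ell_u$--$\ell_v$ lower bound on the normalizer over $\mathcal{D}_{\nu,\kappa}$ with $C_{\nu,\kappa}$ as the rationalized root of the same quadratic, and Tonelli plus the type~II bound of Lemma~\ref{hyptest} on $\mathcal{B}\cap\mathcal{P}$ (which is exactly the set $\mathcal{P}^\varepsilon_{S,R}$ of that lemma) versus the trivial bound on $\mathcal{B}\setminus\mathcal{P}$. No gaps of substance.
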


We recover Theorem \ref{mainthm} when we set $u=1$, $v=\infty$, $\kappa=\sigma\sqrt{n}\sqrt{2(1+\tau)\log p}$, and $\nu=1+\tau$.

\begin{proof}[Proof of Theorem \ref{mainthm}]
We apply the standard divide-and-conquer strategy originally devised by Schwartz \cite{Sch65} to obtain
\begin{eqnarray}
\Pi(\mathcal{B}\vert y)&=&\Phi(y)\Pi(\mathcal{B}\vert y)+(1-\Phi(y))\Pi(\mathcal{B}\vert y){\bf 1}_{\mathcal{A}_\kappa}(y)\\
&&+(1-\Phi(y))\Pi(\mathcal{B}\vert y){\bf 1}_{\mathcal{A}_\kappa^c}(y)\\
&\leq&\Phi(y)+(1-\Phi(y))\Pi(\mathcal{B}\vert y){\bf 1}_{\mathcal{A}_\kappa}(y)+{\bf 1}_{\mathcal{A}_\kappa^c}(y)
\end{eqnarray}
By Lemma 1, we have that $\mathbb{E}_{\beta_0}\Phi<\frac{1}{p^\alpha\sqrt{\pi\log p}}$, so this term is immediately eliminated. Additionally, we have that $\mathbb{E}_{\beta^0}{\bf 1}_{\mathcal{A}_\kappa^c}(y)=\text{pr}_{\beta^0}(\mathcal{A}_\kappa^c)$. Having dispensed with the first and third terms, we proceed to attack the middle term.

We first multiply this remaining term by a form of $1$ to obtain

\begin{eqnarray}\frac{(1-\Phi(y))\int_\mathcal{B}\frac{f(y\vert\beta)}{f(y\vert\beta^0)}d\Pi(\beta)}{\int \frac{f(y\vert\beta)}{f(y\vert\beta^0)}d\Pi(\beta)}{\bf 1}_{\mathcal{A}_\kappa}\end{eqnarray}

\noindent Now, we bound the denominator by the expression

\begin{eqnarray}\int\frac{f(y\vert\beta)}{f(y\vert\beta^0)}d\Pi(\beta)\geq \exp\{-\nu\log p\}\Pi(\mathcal{D}_\nu(y))\end{eqnarray}

\noindent where

\begin{eqnarray}
\mathcal{D}_\nu(y)&=&\{\beta\in\mathbb{R}^p:\frac{1}{\log p}\log\frac{f(y\vert\beta^0)}{f(y\vert\beta)}<\nu\}\\
&=&\{\beta\in\mathbb{R}^p:\frac{1}{\log p}(\Vert y-X\beta\Vert_\lt^2-\Vert y-X\beta^0\Vert_\lt^2)<2\sigma^2\nu\}\\
&=&\{\beta\in\mathbb{R}^p:\Vert y-X\beta\Vert_\lt^2-\Vert y-X\beta^0\Vert_\lt^2<2\sigma^2\nu\log p\}\label{eq01}
\end{eqnarray}

\noindent By applying the H\"{o}lder inequality and the definition of the operator norm, it is easy to see that the left-hand side of the inequality in (\ref{eq01}) is
\begin{eqnarray}
&=&\langle (y-X\beta)+(y-X\beta^0),(y-X\beta)-(y-X\beta^0)\rangle\\
&=&\langle 2y-2X\beta^0,X(\beta^0-\beta)\rangle +\langle X(\beta^0-\beta),X(\beta^0-\beta)\rangle\\
&\leq&2\Vert X^T(y-X\beta^0)\Vert_{\ell_v}\Vert \beta-\beta^0\Vert_{\ell_u}+\Vert X^T X\Vert_{\ell_u\rightarrow\ell_v}\Vert\beta-\beta^0\Vert_{\ell_u}^2\\
&\leq&2\kappa\Vert \beta-\beta^0\Vert_{\ell_u}+\Vert X^T X\Vert_{\ell_u\rightarrow\ell_v}\Vert\beta-\beta^0\Vert_{\ell_u}^2\end{eqnarray}
\noindent since $\kappa\geq\Vert X^T(y-X\beta^0)\Vert_{\ell_v}$ for $y\in\mathcal{A}_\kappa$. Following (\ref{eq01}), we force $\Vert \beta-\beta^0\Vert_\lt$ to satisfy the inequality 

\begin{eqnarray}
\Vert X^T X\Vert_{\ell_u\rightarrow\ell_v}\Vert\beta-\beta^0\Vert_{\ell_u}^2+2\kappa\Vert\beta-\beta^0\Vert_{\ell_u}<2\sigma^2\nu\log p.
\end{eqnarray}

\noindent It is not difficult to establish that the following bound on $\Vert\beta-\beta^0\Vert_{\ell_u}$ implies this previous inequality
\small
\begin{eqnarray}
&<&\frac{\sqrt{4\kappa^2+8\Vert X^T X\Vert_{\ell_u\rightarrow\ell_v}\sigma^2\nu\log p}-2\kappa}{2\Vert X^T X\Vert_{\ell_u\rightarrow\ell_v}}\\
&=&\left(\frac{\sqrt{8\Vert X^T X\Vert_{\ell_u\rightarrow\ell_v}\sigma^2\nu\log p}}{2\kappa+\sqrt{4\kappa^2+8\Vert X^T X\Vert_{\ell_u\rightarrow\ell_v}\sigma^2\nu\log p}}\right)\frac{\sqrt{8\Vert X^T X\Vert_{\ell_u\rightarrow\ell_v}\sigma^2\nu\log p}}{2\Vert X^T X\Vert_{\ell_u\rightarrow\ell_v}}\\
&=&\left(\frac{\sqrt{2\Vert X^T X\Vert_{\ell_u\rightarrow\ell_v}\sigma^2\nu\log p}}{\kappa+\sqrt{\kappa^2+2\Vert X^T X\Vert_{\ell_u\rightarrow\ell_v}\sigma^2\nu\log p}}\right)\sqrt{\frac{2\sigma^2\nu\log p}{\Vert X^T X\Vert_{\ell_u\rightarrow\ell_v}}}.
\end{eqnarray}
\normalsize
\noindent Based on this sequence of inequalities, we conclude that $\mathcal{D}_{\nu,\kappa}\subset\mathcal{D}_\nu(y)$ when $y\in\mathcal{A}_\kappa$. Putting this all together, we have that $\Pi(\mathcal{D}_\nu(y))\geq\Pi(\mathcal{D}_{\nu,\kappa})$ for $y\in\mathcal{A}_\kappa$, and hence
\begin{eqnarray}\int\frac{f(y\vert\beta)}{f(y\vert\beta^0)}d\Pi(\beta)\geq p^{-\nu}\Pi(\mathcal{D}_{\nu,\kappa})\end{eqnarray}

\noindent for all $y\in\mathcal{A}_\kappa$. Applying this, we obtain the bound

\begin{eqnarray}
(1-\Phi(y))\Pi(\mathcal{B}){\bf 1}_{\mathcal{A}_\kappa}(y)\leq \frac{(1-\Phi(y))\int_\mathcal{B} \frac{f(y\vert\beta)}{f(y\vert\beta^0)}d\Pi(\beta)}{p^{-\nu}\Pi(\mathcal{D}_{\nu,\kappa})}.\end{eqnarray}

\noindent Taking the expectation of the numerator and applying Tonelli yields

\begin{eqnarray}
\mathbb{E}_{\beta^0}(1-\Phi(y))\int_\mathcal{B} \frac{f(y\vert\beta)}{f(y\vert\beta^0)}d\Pi(\beta)&=&\int_\mathcal{B} \mathbb{E}_{\beta_0}(1-\Phi(y))\frac{f(y\vert\beta)}{f(y\vert\beta^0)}d\Pi(\beta)\\
&=&\int_\mathcal{B} \mathbb{E}_{\beta}(1-\Phi(y))d\Pi(\beta)
\end{eqnarray}
We now split this and bound using Lemma 1:
\begin{eqnarray}
\int_\mathcal{B} \mathbb{E}_{\beta}(1-\Phi(y))d\Pi(\beta)&=&\int_{\mathcal{B}\setminus\mathcal{P}}\mathbb{E}_{\beta}(1-\Phi(y))d\Pi(\beta)\\
&&+\int_{\mathcal{P}}\mathbb{E}_{\beta}(1-\Phi(y))d\Pi(\beta)\\
&\leq&\Pi(\mathcal{B}\setminus\mathcal{P})+\Pi(\mathcal{P})\frac{1}{p^\alpha\sqrt{\pi\log p}}\\
&\leq&\Pi(\mathcal{B}\setminus\mathcal{P})+\frac{1}{p^\alpha\sqrt{\pi\log p}}
\end{eqnarray}
This establishes the result.
\end{proof}

%% Proofs for Lemmas from the brute force analysis
\section*{Appendix B}

In order to prove Theorem \ref{UG}, we shall require some additional notation. For a fixed $\sigma, V, X, y, e,$ and $\gamma\in\{0,1\}^p$, we let $X_\gamma$ denote the matrix obtained by deleting the columns of $X$ with indices $i$ such that $\gamma_i=0$, $P_\gamma$ denote the orthogonal projection onto the span of the columns of $X_\gamma$,
\begin{equation}
\Sigma_\gamma=\left(\frac{1}{\sigma^2}X_\gamma^T X_\gamma+\frac{1}{V^2}I_{S\times S}\right)^{-1/2},
\end{equation}
and
\begin{equation}
\mu_\gamma=\frac{1}{\sigma^2}\Sigma_\gamma^{2}X_\gamma^Ty. 
\end{equation}
Additionally, we shall slightly abuse notation by letting $\beta_\gamma$ denote the projection of $\beta$ onto the coordinates indicated by $\gamma$ and the Hadamard product of $\beta$ with $\gamma$ depending upon the context. Finally, for $\gamma,\gamma^\prime\in\{0,1\}^p$, we shall write $\gamma\leq\gamma^\prime$ to indicate that $\gamma^\prime$ dominates $\gamma$ entry wise.

We first begin with a simple probabilistic noise bound in the spirit of Cand\`{e}s and Tao \cite{CT05}. The proof is a simple application of the Markov inequality.

\begin{lem}\label{lem:noise}
Assuming that $e_i\sim\mathcal{N}(0,\sigma^2)$ for $i=1,\ldots, n$ and $\Vert \widetilde{X}_i\Vert_{\lt}^2=1$ for $i=1,\ldots, p$. If 
\begin{equation}
\mathcal{E}=\{e\in\mathbb{R}^n:\Vert \widetilde{X}_\gamma^Te\Vert_{\lt}^2\leq4\sigma^2(1+\alpha)\vert\gamma\vert\log p, \forall \gamma\in\{0,1\}^p\},\label{noiseEvent}
\end{equation}
then
\begin{eqnarray}
\rm{pr}_e(\mathcal{E})>1-\frac{1}{p^{\alpha}\sqrt{\pi\log p}}.
\end{eqnarray}
\end{lem}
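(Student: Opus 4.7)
The plan is to reduce the uniform bound over all $\gamma \in \{0,1\}^p$ to a bound on $\max_{1 \le i \le p} |\widetilde{X}_i^T e|$, and then handle this maximum by a Gaussian tail bound plus a union bound over the $p$ columns of $\widetilde{X}$. The exponentially large index set $\{0,1\}^p$ is a red herring: the inequality that defines $\mathcal{E}$ is linear in $|\gamma|$, so it is enough to control each individual coordinate of $\widetilde{X}^T e$.

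First I would observe that since $\|\widetilde{X}_i\|_{\ell_2} = 1$, the linear functional $\widetilde{X}_i^T e$ is a centered Gaussian of variance $\sigma^2$, and then decompose
\[
\Vert \widetilde{X}_\gamma^T e \Vert_{\ell_2}^2 = \sum_{i : \gamma_i = 1} (\widetilde{X}_i^T e)^2 \le |\gamma| \cdot \max_{1 \le i \le p}(\widetilde{X}_i^T e)^2.
\]
Setting $t = \sigma\sqrt{2(1+\alpha)\log p}$, I define the simpler event $\mathcal{F} = \{e \in \mathbb{R}^n : \max_i |\widetilde{X}_i^T e| \le t\}$. Then on $\mathcal{F}$ every $\gamma$ satisfies $\|\widetilde{X}_\gamma^T e\|_{\ell_2}^2 \le 2\sigma^2(1+\alpha)|\gamma|\log p < 4\sigma^2(1+\alpha)|\gamma|\log p$, so $\mathcal{F} \subset \mathcal{E}$ and it suffices to estimate $\mathrm{pr}(\mathcal{F}^c)$.

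For each fixed $i$, the variable $Z_i := \widetilde{X}_i^T e / \sigma$ is standard normal, and the usual Mills-ratio-style application of Markov's inequality to $e^{\lambda Z_i}$ (optimized at $\lambda = u$) gives the tail bound
\[
\mathrm{pr}(|Z_i| \ge u) \le \sqrt{\tfrac{2}{\pi}}\, \frac{e^{-u^2/2}}{u}, \qquad u > 0.
\]
Taking $u = t/\sigma = \sqrt{2(1+\alpha)\log p}$ yields $e^{-u^2/2} = p^{-(1+\alpha)}$, so each individual tail is at most $p^{-(1+\alpha)} / \sqrt{\pi(1+\alpha)\log p}$.

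Finally I would apply a union bound over $i = 1, \ldots, p$ to conclude
\[
\mathrm{pr}(\mathcal{F}^c) \le p \cdot \frac{p^{-(1+\alpha)}}{\sqrt{\pi(1+\alpha)\log p}} = \frac{p^{-\alpha}}{\sqrt{\pi(1+\alpha)\log p}} \le \frac{1}{p^\alpha \sqrt{\pi\log p}},
\]
where the last inequality uses $\alpha > 0$. Combined with $\mathcal{F} \subset \mathcal{E}$, this gives the claimed lower bound on $\mathrm{pr}(\mathcal{E})$. There is no real obstacle here; the only thing to watch is the bookkeeping of constants, in particular that the factor $4$ in the definition of $\mathcal{E}$ gives enough slack to absorb the constants coming from the Mills ratio bound and the $\sqrt{1+\alpha}$ factor from the union bound.
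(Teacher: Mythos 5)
Your argument is correct and is essentially the proof the paper has in mind: the paper omits the details, calling the lemma a simple Markov/union-bound estimate in the spirit of Cand\`{e}s and Tao, and your reduction to $\max_i|\widetilde{X}_i^Te|\le\sigma\sqrt{2(1+\alpha)\log p}$ followed by a per-column Gaussian tail bound and a union bound over the $p$ columns is exactly that argument, with the factor $4$ in the definition of $\mathcal{E}$ providing harmless slack. One small correction: the inequality $\mathrm{pr}(|Z_i|\ge u)\le\sqrt{2/\pi}\,e^{-u^2/2}/u$ is not obtained by applying Markov to $e^{\lambda Z_i}$ (that Chernoff computation only yields $2e^{-u^2/2}$, which would lose the $\sqrt{\pi\log p}$ factor in the final bound); it is the standard Gaussian tail estimate $\int_u^\infty\phi(x)\,dx\le\phi(u)/u$, which is true and is exactly what your union bound needs, so the proof stands as written.
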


The next lemma we shall require deterministically bounds the difference between similar operators.

\begin{lem}\label{lem:projappx}
Assume (A1)-(A4), if  $\gamma\in\{0,1\}^p$ satisfies $\gamma^0\leq\gamma$ and $\vert\gamma\vert\leq 2S$, we have that \begin{eqnarray}
\left\Vert P_{\gamma}-X_{\gamma}(X_\gamma^TX_\gamma+\frac{\sigma^2}{V^2}I_{\vert\gamma\vert\times\vert\gamma\vert})^{-1}X_{\gamma}^T\right\Vert_{\lt\rightarrow\lt}\leq\frac{\sigma^2}{n(1-\delta)V^2+\sigma^2}.
\end{eqnarray}
and
\begin{eqnarray}
\left\Vert I_{\vert\gamma\vert\times\vert\gamma\vert}-(X_\gamma^TX_\gamma+\frac{\sigma^2}{V^2}I_{\vert\gamma\vert\times\vert\gamma\vert})^{-1}X_{\gamma}^TX_{\gamma}\right\Vert_{\lt\rightarrow\lt}\leq\frac{\sigma^2}{n(1-\delta)V^2+\sigma^2}.
\end{eqnarray}
\end{lem}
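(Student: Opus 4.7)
The plan is to reduce both inequalities to spectral calculations on $X_\gamma^T X_\gamma$, using assumption (A4) to control the smallest eigenvalue of $X_\gamma^T X_\gamma$. Specifically, since $X=\sqrt{n}\widetilde{X}$ and $|\gamma|\leq 2S$, the RIP condition gives
\[
n(1-\delta)\|b\|_{\lt}^2 \leq \|X_\gamma b\|_{\lt}^2 \leq n(1+\delta)\|b\|_{\lt}^2
\]
for every $b\in\mathbb{R}^{|\gamma|}$. In particular, $X_\gamma^T X_\gamma$ is invertible and every eigenvalue lies in $[n(1-\delta),n(1+\delta)]$, so $P_\gamma=X_\gamma(X_\gamma^TX_\gamma)^{-1}X_\gamma^T$ is the orthogonal projection onto the column span, and the unregularized inverse is well-defined.

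For the first inequality, I would set $A=X_\gamma^TX_\gamma$ and $c=\sigma^2/V^2$, then apply the resolvent identity $A^{-1}-(A+cI)^{-1}=cA^{-1}(A+cI)^{-1}$ to rewrite
\[
P_\gamma - X_\gamma(A+cI)^{-1}X_\gamma^T = cX_\gamma A^{-1}(A+cI)^{-1} X_\gamma^T.
\]
Using an SVD $X_\gamma = U\Sigma V^T$, direct computation reduces the right-hand side to $cU(\Sigma^2+cI)^{-1}U^T$, so its operator norm equals $\max_i c/(\sigma_i^2+c)$, where the $\sigma_i^2$ are the eigenvalues of $A$. The smallest of these is at least $n(1-\delta)$ by the RIP bound above, giving the claimed $\sigma^2/(n(1-\delta)V^2+\sigma^2)$.

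The second inequality is even more direct: from $(A+cI)^{-1}(A+cI)=I$ we immediately get $I - (A+cI)^{-1}A = c(A+cI)^{-1}$, so the operator norm equals $\max_i c/(\sigma_i^2+c)$, which is bounded by the same quantity via the same RIP-based lower bound on the eigenvalues of $A$.

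There is no real obstacle here; the only thing to be careful about is ensuring the eigenvalue bound $\sigma_{\min}^2(X_\gamma)\geq n(1-\delta)$ is applied to $|\gamma|\leq 2S$ so that (A4) applies (noting $\delta_k$ is monotone in $k$). The hypothesis $\gamma^0\leq \gamma$ is not needed for the operator norm estimates themselves; it reflects the way the lemma will be invoked later, since in the application $X_\gamma$ must contain the columns supporting $\beta^0$.
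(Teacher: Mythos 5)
Your argument is correct and is exactly the route the paper intends: the paper gives no detailed proof, stating only that the lemma ``follows from the RIP hypothesis and application of an SVD,'' which is precisely your resolvent-identity/SVD computation combined with the eigenvalue bound $\sigma_{\min}^2(X_\gamma)\geq n(1-\delta)$ from (A4). Your remarks that $\delta<1$ guarantees invertibility of $X_\gamma^TX_\gamma$ and that the hypothesis $\gamma^0\leq\gamma$ plays no role in these two operator-norm bounds are also accurate.
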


We shall also require bounds on the determinants of the restricted operators. This lemma and the preceding lemma follow from the RIP hypothesis and application of an SVD.

\begin{lem}\label{lem:det}
Assuming (A1) and (A4), if $\gamma\in\{0,1\}^p$ satisfies $\vert\gamma\vert\leq2S$, then we have that
\begin{eqnarray}
\left(\frac{n(1+\delta)}{\sigma^2}+\frac{1}{V^2}\right)^{-\vert\gamma\vert/2}\leq\det(\Sigma_\gamma)\leq\left(\frac{n(1-\delta)}{\sigma^2}+\frac{1}{V^2}\right)^{-\vert\gamma\vert/2}.
\end{eqnarray}
On the other hand, if $\vert\gamma\vert>2S$, then
\begin{eqnarray}
\det(\Sigma_\gamma)\leq\left(\frac{n(1-\delta)}{\sigma^2}+\frac{1}{V^2}\right)^{-S}.
\end{eqnarray}
\end{lem}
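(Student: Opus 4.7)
The plan is to reduce the determinant bounds to spectral statements about $X_\gamma^T X_\gamma$ and invoke (A4). Since $\Sigma_\gamma^{-2}=\tfrac{1}{\sigma^2}X_\gamma^T X_\gamma+\tfrac{1}{V^2}I$ is a symmetric positive-definite perturbation of a multiple of $X_\gamma^T X_\gamma$, its eigenvalues are $\nu_i=\lambda_i/\sigma^2+1/V^2$, where $\lambda_1\geq\cdots\geq\lambda_{|\gamma|}\geq 0$ are the eigenvalues of $X_\gamma^T X_\gamma$. Hence
\[
\det(\Sigma_\gamma)=\prod_{i=1}^{|\gamma|}\left(\frac{\lambda_i}{\sigma^2}+\frac{1}{V^2}\right)^{-1/2},
\]
and the whole problem reduces to locating the $\lambda_i$.

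For the first bound ($|\gamma|\leq 2S$), I would apply the RIP (A4) directly. Because $\delta=\delta_{2S}(\widetilde{X})$ and $|\gamma|\leq 2S$, every vector $b$ supported on $\gamma$ satisfies $(1-\delta)\|b\|_{\lt}^2\leq\|\widetilde{X}_\gamma b\|_{\lt}^2\leq(1+\delta)\|b\|_{\lt}^2$, so the eigenvalues of $\widetilde{X}_\gamma^T\widetilde{X}_\gamma$ all lie in $[1-\delta,1+\delta]$. Rescaling via $X=\sqrt{n}\,\widetilde{X}$ gives $\lambda_i\in[n(1-\delta),n(1+\delta)]$ for every $i=1,\ldots,|\gamma|$. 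Substituting these uniform bounds into the product yields
\[
\left(\frac{n(1+\delta)}{\sigma^2}+\frac{1}{V^2}\right)^{-|\gamma|/2}\leq\det(\Sigma_\gamma)\leq\left(\frac{n(1-\delta)}{\sigma^2}+\frac{1}{V^2}\right)^{-|\gamma|/2},
\]
which is exactly the first claim.

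For the second bound ($|\gamma|>2S$), the RIP no longer applies to all of $\widetilde{X}_\gamma$, so I would invoke Cauchy's interlacing theorem. Pick any index set $T\subset\gamma$ with $|T|=2S$; then $X_T^T X_T$ is a principal submatrix of $X_\gamma^T X_\gamma$, and interlacing gives $\lambda_k(X_\gamma^T X_\gamma)\geq\lambda_k(X_T^T X_T)$ for $k=1,\ldots,2S$. The first part, applied to $T$, implies $\lambda_k(X_T^T X_T)\geq n(1-\delta)$. Thus the top $2S$ eigenvalues of $\Sigma_\gamma^{-2}$ are each at least $n(1-\delta)/\sigma^2+1/V^2$, while the remaining $|\gamma|-2S$ eigenvalues satisfy $\nu_i\geq 1/V^2>0$. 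Multiplying and taking the $-1/2$ power then yields the claimed upper bound on $\det(\Sigma_\gamma)$.

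The step I expect to carry the most subtlety is the second case: interlacing only gives information about the top $2S$ spectrum, and absorbing the contribution of the remaining $|\gamma|-2S$ eigenvalues into the stated bound requires either a regime on $V$ (so that each trailing factor $(\,\nu_i^{-1/2}\,)$ does not enlarge the product) or appealing to the fact that in the application of this lemma only configurations with $\nu_i^{-1/2}$ uniformly controlled enter. Once this bookkeeping is handled, the proof is essentially just the spectral identity $\det(\Sigma_\gamma)=\prod_i\nu_i^{-1/2}$ combined with RIP on subsets of size $\leq 2S$ and Cauchy interlacing to promote the RIP conclusion to the larger matrix.
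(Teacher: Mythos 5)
Your treatment of the case $\vert\gamma\vert\le 2S$ is correct and is exactly the argument the paper has in mind: the paper offers no written proof of this lemma beyond the remark that it ``follows from the RIP hypothesis and application of an SVD,'' and your spectral computation --- the eigenvalues $\lambda_i$ of $X_\gamma^TX_\gamma$ lie in $[n(1-\delta),n(1+\delta)]$ because $\delta=\delta_{2S}(\widetilde{X})$ and $X=\sqrt{n}\,\widetilde{X}$, and then $\det(\Sigma_\gamma)=\prod_i(\lambda_i/\sigma^2+1/V^2)^{-1/2}$ --- is precisely that SVD/RIP argument.

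The case $\vert\gamma\vert>2S$, however, is left with a genuine gap, and you correctly put your finger on where it is. Cauchy interlacing gives you $2S$ eigenvalues of $\Sigma_\gamma^{-2}$ that are at least $n(1-\delta)/\sigma^2+1/V^2$, hence a factor $\left(n(1-\delta)/\sigma^2+1/V^2\right)^{-S}$ in the determinant, but the remaining $\vert\gamma\vert-2S$ eigenvalues are only bounded below by $1/V^2$, so each of the remaining factors in $\prod_i\nu_i^{-1/2}$ can be as large as $V$; your argument therefore yields $\left(n(1-\delta)/\sigma^2+1/V^2\right)^{-S}V^{\vert\gamma\vert-2S}$, which coincides with the stated bound only when $V\le 1$. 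Moreover, this cannot be repaired by better spectral bookkeeping: when $\vert\gamma\vert>n$ the Gram matrix $X_\gamma^TX_\gamma$ is rank deficient, so $\vert\gamma\vert-n$ of the $\nu_i$ equal $1/V^2$ exactly, and for $V$ large the determinant genuinely exceeds the stated bound; the second inequality thus requires an additional hypothesis (for instance $V\le 1$) rather than a cleverer proof. Two mitigating observations: the paper supplies no proof of this lemma at all, and the $\vert\gamma\vert>2S$ case is never invoked in the proof of Theorem \ref{UG}, where every index set appearing has $\vert\gamma\vert=S\le 2S$ --- so the incompleteness you flagged reflects an imprecision in the lemma's statement more than a defect in your approach to it.
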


Now, we exhibit a bound on the difference between norms of different reconstructions.

\begin{lem}\label{lem:term1}
Assume (A1), (A2), and (A4). If $\gamma,\gamma^\prime\in\{0,1\}^p$ satisfy $\gamma^0\leq\gamma$ and $\vert\gamma^\prime\vert\leq 2S$, then
\begin{eqnarray}
(X\beta^0)^T(P_{\gamma^\prime}-P_{\gamma})X\beta^0\leq-n\left(1-\delta-\frac{\theta^2}{1-\delta}\right)\Vert\beta^0_{\gamma^0\setminus\gamma^\prime}\Vert_{\lt}^2,
\end{eqnarray}
where $\gamma^0\setminus\gamma^\prime=\{i\in[p]:\gamma^0_i=1-\gamma^\prime_i=1\}$. Moreover, $\delta+\frac{\theta^2}{1-\delta}<1$.
\end{lem}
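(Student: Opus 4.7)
The plan is to use the hypothesis $\gamma^0 \leq \gamma$ to eliminate $P_\gamma$ entirely. Since $\text{supp}(\beta^0) \subseteq \gamma$, the vector $X\beta^0$ lies in the column span of $X_\gamma$, so $P_\gamma X\beta^0 = X\beta^0$, and therefore
\[
(X\beta^0)^T(P_{\gamma^\prime}-P_\gamma)X\beta^0 = (X\beta^0)^T P_{\gamma^\prime} X\beta^0 - \Vert X\beta^0\Vert_\lt^2.
\]
Next I would decompose $\beta^0 = \beta^0_{\gamma^0\cap\gamma^\prime} + \beta^0_{\gamma^0\setminus\gamma^\prime}$ (valid because $\text{supp}(\beta^0)\subseteq\gamma^0$) and write $X\beta^0 = u+w$ with $u = X_{\gamma^0\cap\gamma^\prime}\beta^0_{\gamma^0\cap\gamma^\prime}$ and $w = X_{\gamma^0\setminus\gamma^\prime}\beta^0_{\gamma^0\setminus\gamma^\prime}$. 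Crucially, $u \in \text{range}(X_{\gamma^\prime})$, hence $P_{\gamma^\prime}u = u$. Expanding and using self-adjointness of $P_{\gamma^\prime}$, the cross terms $\langle u, P_{\gamma^\prime}w\rangle$ and $\langle u,w\rangle$ coincide and cancel, leaving the clean identity
\[
(X\beta^0)^T(P_{\gamma^\prime}-P_\gamma)X\beta^0 = -\Vert(I-P_{\gamma^\prime})w\Vert_\lt^2.
\]

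All that remains is to lower bound $\Vert(I-P_{\gamma^\prime})w\Vert_\lt^2 = \Vert w\Vert_\lt^2 - \Vert P_{\gamma^\prime}w\Vert_\lt^2$ by $n(1-\delta-\theta^2/(1-\delta))\Vert\beta^0_{\gamma^0\setminus\gamma^\prime}\Vert_\lt^2$. Since $\vert\gamma^0\setminus\gamma^\prime\vert\leq S\leq 2S$, the RIP for $\widetilde{X}$ (rescaled by $n$) gives $\Vert w\Vert_\lt^2 \geq n(1-\delta)\Vert\beta^0_{\gamma^0\setminus\gamma^\prime}\Vert_\lt^2$. For the subtracted piece, write $\Vert P_{\gamma^\prime}w\Vert_\lt^2 = (X_{\gamma^\prime}^T w)^T(X_{\gamma^\prime}^T X_{\gamma^\prime})^{-1}(X_{\gamma^\prime}^T w)$. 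Because $\vert\gamma^\prime\vert\leq 2S$, RIP forces the smallest eigenvalue of $X_{\gamma^\prime}^T X_{\gamma^\prime}$ to be at least $n(1-\delta)$, so the middle factor is bounded in operator norm by $1/(n(1-\delta))$. Since $\gamma^\prime$ and $\gamma^0\setminus\gamma^\prime$ are disjoint with sizes at most $2S$ and $S$ respectively, restricted orthogonality gives $\Vert X_{\gamma^\prime}^T w\Vert_\lt \leq n\theta\Vert\beta^0_{\gamma^0\setminus\gamma^\prime}\Vert_\lt$. Combining, $\Vert P_{\gamma^\prime}w\Vert_\lt^2 \leq (n\theta^2/(1-\delta))\Vert\beta^0_{\gamma^0\setminus\gamma^\prime}\Vert_\lt^2$, which yields the stated inequality.

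For the auxiliary claim $\delta + \theta^2/(1-\delta) < 1$, multiplying through by $1-\delta>0$ reduces it to $\theta^2 < (1-\delta)^2$, equivalently $\theta < 1-\delta$, which is exactly assumption (A4). The main obstacle I anticipate is purely bookkeeping: tracking the $\sqrt{n}$ scaling between $X$ and $\widetilde{X}$, and verifying at each invocation that the index sets involved meet the size constraints under which $\delta_{2S}$ and $\theta_{S,2S}$ are defined; once the decomposition $X\beta^0 = u+w$ is in place and the cross terms cancel, the rest is a direct application of RIP and restricted orthogonality.
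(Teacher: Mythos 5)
Your proposal is correct and follows essentially the same route as the paper: use $\gamma^0\leq\gamma$ to replace $P_\gamma X\beta^0$ by $X\beta^0$, reduce to $-\Vert(I-P_{\gamma^\prime})X_{\gamma^0\setminus\gamma^\prime}\beta^0_{\gamma^0\setminus\gamma^\prime}\Vert_{\lt}^2$ (the paper states this restriction directly, you justify it via the $u+w$ cancellation), then bound $\Vert P_{\gamma^\prime}w\Vert_{\lt}^2$ by $\frac{n\theta^2}{1-\delta}\Vert\beta^0_{\gamma^0\setminus\gamma^\prime}\Vert_{\lt}^2$ using the eigenvalue bound $\lambda_{\min}(X_{\gamma^\prime}^TX_{\gamma^\prime})\geq n(1-\delta)$ together with restricted orthogonality, and combine with the RIP lower bound on $\Vert w\Vert_{\lt}^2$. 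The closing observation that (A4) gives $\theta^2<(1-\delta)^2$ and hence $\delta+\theta^2/(1-\delta)<1$ is also exactly the paper's argument.
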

\begin{proof}
Since $\gamma^0\leq\gamma$, we have that $P_\gamma X\beta^0=X_{\gamma_0}\beta_{\gamma^0}^0$, and therefore
\begin{eqnarray}
\left(X\beta^0\right)^T(P_{\gamma^\prime}-P_\gamma)X\beta^0&=&\left(X_{\gamma^0}\beta_{\gamma^0}^0\right)^T(P_{\gamma^\prime}-I_{n\times n})X_{\gamma_0}\beta_{\gamma^0}\\
&=&\left(X_{\gamma^0\setminus\gamma^\prime}\beta_{\gamma^0\setminus\gamma^\prime}^0\right)^T(P_{\gamma^\prime}-I_{n\times n})X_{\gamma_0\setminus\gamma^\prime}\beta_{\gamma^0\setminus\gamma^\prime}\nonumber\\
&=&-\Vert X_{\gamma^0\setminus\gamma^\prime}\beta_{\gamma^0\setminus\gamma^\prime}^0\Vert_{\lt}^2+\Vert P_{\gamma^\prime} X_{\gamma^0\setminus\gamma^\prime}\beta_{\gamma^0\setminus\gamma^\prime}^0\Vert_{\lt}^2.\nonumber
\end{eqnarray}
We then have
\begin{eqnarray}
\Vert P_{\gamma^\prime} X_{\gamma^0\setminus\gamma^\prime}\beta_{\gamma^0\setminus\gamma^\prime}^0\Vert_{\lt}^2&\leq&\frac{n}{1-\delta}\Vert \widetilde{X}_{\gamma^\prime}^T \widetilde{X}_{\gamma^0\setminus\gamma^\prime}\beta_{\gamma^0\setminus\gamma^\prime}^0\Vert_{\lt}^2\\
&\leq&n\frac{\theta^2}{1-\delta}\Vert\beta_{\gamma^0\setminus\gamma^\prime}^0\Vert_{\lt}^2
\end{eqnarray}
since
\begin{eqnarray}
\Vert \widetilde{X}_\gamma^T \widetilde{X}_{\gamma^0\setminus\gamma}\beta_{\gamma^0\setminus\gamma}^0\Vert_{\lt}^2&=&\langle \widetilde{X}_\gamma \widetilde{X}_\gamma^T\widetilde{X}_{\gamma^0\setminus\gamma}\beta_{\gamma^0\setminus\gamma}^0,\widetilde{X}_{\gamma^0\setminus\gamma}\beta_{\gamma^0\setminus\gamma}^0\rangle\\
&\leq&\theta\Vert \widetilde{X}_\gamma^T \widetilde{X}_{\gamma^0\setminus\gamma}\beta_{\gamma^0\setminus\gamma}^0\Vert_{\lt}\Vert\beta_{\gamma^0\setminus\gamma}^0\Vert_{\lt}
\end{eqnarray}
implies $\Vert \widetilde{X}_\gamma^T \widetilde{X}_{\gamma^0\setminus\gamma}\beta_{\gamma^0\setminus\gamma}^0\Vert_{\lt}^2\leq\theta^2\Vert\beta_{\gamma^0\setminus\gamma}^0\Vert_{\lt}^2$.  Combining this with the fact that $n(1-\delta)\Vert\beta_{\gamma^0\setminus\gamma^\prime}^0\Vert_{\lt}^2 \leq\Vert X_{\gamma^0\setminus\gamma^\prime}\beta_{\gamma^0\setminus\gamma^\prime}^0\Vert_{\lt}^2$, we obtain the bound
\begin{eqnarray}
\left(X\beta^0\right)^T(P_{\gamma^\prime}-P_\gamma)X\beta^0 \leq -n\left(1-\delta-\frac{\theta^2}{1-\delta}\right)\Vert\beta_{\gamma^0\setminus\gamma}^0\Vert_{\lt}^2.
\end{eqnarray}
Finally, note that
\begin{eqnarray}
\theta+\delta<1\Longrightarrow\theta^2<(1-\delta)^2\Longrightarrow\frac{\theta^2}{1-\delta}\leq 1-\delta\Longrightarrow\delta+\frac{\theta^2}{1-\delta}<1.
\end{eqnarray}
\end{proof}

This next lemma bounds the differences of inner products of reconstructions with the noise vector. 

\begin{lem}\label{lem:term2}
Assume (A1) through (A4), and $e\in\mathcal{E}$ from (\ref{noiseEvent}). If $\gamma^0\leq\gamma$ and $\vert\gamma^\prime\vert\leq 2S$, then
\begin{equation}
\vert (X\beta^0)^T(P_{\gamma^\prime}-P_{\gamma})e\vert \leq 2\sigma\frac{1-\delta+\theta}{1-\delta}\sqrt{(1+\alpha)\max\{\vert\gamma^0\setminus\gamma^\prime\vert,\vert\gamma^\prime\vert\} n\log p}\Vert\beta^0_{\gamma^0\setminus\gamma^\prime}\Vert_{\lt}.\nonumber
\end{equation}
\end{lem}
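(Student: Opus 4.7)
The plan is to exploit the containment $\gamma^0 \le \gamma$ to rewrite the left-hand side as the inner product of $e$ against the residual of $X\beta^0$ after projection onto the column space of $X_{\gamma'}$, and then bound the two resulting pieces using the noise event $\mathcal{E}$ together with the restricted isometry and restricted orthogonality constants.

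Because $\gamma^0 \le \gamma$, the vector $X\beta^0 = X_{\gamma^0}\beta^0_{\gamma^0}$ lies in the range of $X_\gamma$, so $P_\gamma X\beta^0 = X\beta^0$. Splitting the support $\gamma^0 = (\gamma^0 \cap \gamma') \cup (\gamma^0 \setminus \gamma')$ and observing that $X_{\gamma^0\cap\gamma'}\beta^0_{\gamma^0\cap\gamma'}$ already sits in the range of $X_{\gamma'}$, I would write
\begin{equation*}
(X\beta^0)^T(P_{\gamma'}-P_\gamma)e \;=\; -\bigl[(I-P_{\gamma'})\,v\bigr]^T e, \qquad v := X_{\gamma^0\setminus\gamma'}\beta^0_{\gamma^0\setminus\gamma'},
\end{equation*}
so that the task reduces to bounding $|v^Te|$ and $|v^T P_{\gamma'} e|$ separately.

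For the first term, Cauchy-Schwarz gives $|v^T e| \le \|X_{\gamma^0\setminus\gamma'}^T e\|_{\ell_2}\|\beta^0_{\gamma^0\setminus\gamma'}\|_{\ell_2}$, and then the noise event $\mathcal{E}$ (applied to the indicator of $\gamma^0\setminus\gamma'$, after rescaling $\widetilde{X}=X/\sqrt{n}$) yields $\|X_{\gamma^0\setminus\gamma'}^T e\|_{\ell_2}\le 2\sigma\sqrt{(1+\alpha)|\gamma^0\setminus\gamma'|\,n\log p}$. For the second term I would expand $P_{\gamma'} = X_{\gamma'}(X_{\gamma'}^T X_{\gamma'})^{-1}X_{\gamma'}^T$ and chain three inequalities: (i) the restricted orthogonality bound from Lemma B.4's proof, namely $\|X_{\gamma'}^T X_{\gamma^0\setminus\gamma'}\beta^0_{\gamma^0\setminus\gamma'}\|_{\ell_2} \le n\theta\,\|\beta^0_{\gamma^0\setminus\gamma'}\|_{\ell_2}$, which uses disjointness of $\gamma'$ and $\gamma^0\setminus\gamma'$ together with $|\gamma'|\le 2S$ and $|\gamma^0\setminus\gamma'|\le S$; (ii) $\|(X_{\gamma'}^T X_{\gamma'})^{-1}\|_{\ell_2\to\ell_2}\le 1/(n(1-\delta))$, which follows from the RIP since $|\gamma'|\le 2S$; and (iii) $\|X_{\gamma'}^T e\|_{\ell_2}\le 2\sigma\sqrt{(1+\alpha)|\gamma'|\,n\log p}$ from $\mathcal{E}$. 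Multiplying these gives $|v^T P_{\gamma'}e| \le \frac{2\sigma\theta}{1-\delta}\sqrt{(1+\alpha)|\gamma'|\,n\log p}\,\|\beta^0_{\gamma^0\setminus\gamma'}\|_{\ell_2}$.

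Adding the two bounds produces a factor $\sqrt{|\gamma^0\setminus\gamma'|} + \tfrac{\theta}{1-\delta}\sqrt{|\gamma'|}$, which I bound crudely above by $\tfrac{1-\delta+\theta}{1-\delta}\sqrt{\max\{|\gamma^0\setminus\gamma'|,|\gamma'|\}}$, yielding the stated inequality. The only subtle bookkeeping is the $\sqrt{n}$ rescaling between $X$ and $\widetilde{X}$ (so that the noise event's constants come out right) and the careful choice of which disjoint pair of supports to feed into the restricted orthogonality inequality; no delicate probabilistic step beyond conditioning on $e\in\mathcal{E}$ is required.
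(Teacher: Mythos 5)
Your proposal is correct and follows essentially the same route as the paper: reduce $(X\beta^0)^T(P_{\gamma'}-P_\gamma)e$ to $(X_{\gamma^0\setminus\gamma'}\beta^0_{\gamma^0\setminus\gamma'})^T(P_{\gamma'}-I)e$ using $\gamma^0\le\gamma$, then bound the two pieces by Cauchy--Schwarz with the noise event $\mathcal{E}$ and by the restricted orthogonality/isometry estimate $\Vert X_{\gamma^0\setminus\gamma'}^T P_{\gamma'}e\Vert_{\lt}\le\frac{\theta}{1-\delta}\Vert X_{\gamma'}^Te\Vert_{\lt}$, finishing with the same $\max$-based consolidation of the two square-root factors. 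Your bookkeeping of the $\sqrt{n}$ rescaling and of which disjoint supports enter the restricted orthogonality constant matches the paper's argument.
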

\begin{proof}
We compute
\begin{eqnarray}
\vert(X\beta^0)^T(P_{\gamma^\prime}-P_{\gamma})e\vert&=&\vert(X_{\gamma^0}\beta_{\gamma^0}^0)^T(P_{\gamma^\prime}-I_{n\times n})e\vert\\
&=&\vert(X_{\gamma^0\setminus\gamma}\beta_{\gamma^0\setminus\gamma}^0)^T(P_{\gamma^\prime}-I_{n\times n})e\vert\\
&=&\vert(\beta_{\gamma^0\setminus\gamma^\prime}^0)^TX_{\gamma^0\setminus\gamma^\prime}^Te+(X_{\gamma^0\setminus\gamma^\prime}\beta_{\gamma^0\setminus\gamma^\prime}^0)^TP_{\gamma^\prime}e\vert\\
&\leq&\Vert X_{\gamma^0\setminus\gamma}^Te\Vert_{\lt}\Vert\beta_{\gamma^0\setminus\gamma}^0\Vert_{\lt}+\Vert P_{\gamma^\prime}X_{\gamma^0\setminus\gamma^\prime}^Te\Vert_{\lt}\Vert\beta_{\gamma^0\setminus\gamma^\prime}^0\Vert_{\lt}\nonumber\\
&\leq&\Vert X_{\gamma^0\setminus\gamma^\prime}^Te\Vert_{\lt}\Vert\beta_{\gamma^0\setminus\gamma^\prime}^0\Vert_{\lt}+\frac{\theta}{1-\delta}\Vert X_{\gamma^\prime}^Te\Vert_{\lt}\Vert\beta_{\gamma^0\setminus\gamma^\prime}^0\Vert_{\lt}\nonumber\\
&\leq&2\sigma\left(1+\frac{\theta}{1-\delta}\right)\sqrt{(1+\alpha)n\max\{\vert\gamma^0\setminus\gamma^\prime\vert,\vert\gamma^\prime\vert\}\log p}\Vert\beta_{\gamma^0\setminus\gamma^\prime}^0\Vert_{\lt}.\nonumber
\end{eqnarray}
\end{proof}

Our last lemma is used to clean up a calculation that arises.

\begin{lem}\label{lem:cleanup}
Suppose
\begin{equation}
\tau>A\sqrt{\frac{(1+\alpha)S\log p}{n}}=\frac{8\sqrt{2}\sigma}{1-\delta-\theta}\sqrt{\frac{(1+\alpha)S\log p}{n}}.
\end{equation}
Then
\begin{equation}
-n\left(1-\delta-\frac{\theta^2}{1-\delta}\right)\tau^2+4\sqrt{2}\sigma\frac{1-\delta+\theta}{1-\delta}\sqrt{(1+\alpha)Sn\log p}\tau\leq-\frac{n}{2}\left(1-\delta-\frac{\theta^2}{1-\delta}\right)\tau^2.\nonumber
\end{equation}
\end{lem}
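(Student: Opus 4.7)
The plan is to reduce the displayed inequality to a scalar lower bound on $\tau$ that is algebraically identical to the hypothesis. First I would move $-\tfrac{n}{2}\bigl(1-\delta-\tfrac{\theta^2}{1-\delta}\bigr)\tau^2$ from the right-hand side to the left so that the inequality becomes
\[
4\sqrt{2}\sigma\,\frac{1-\delta+\theta}{1-\delta}\sqrt{(1+\alpha)Sn\log p}\,\tau \leq \tfrac{n}{2}\Bigl(1-\delta-\tfrac{\theta^2}{1-\delta}\Bigr)\tau^2.
\]
Since the hypothesis forces $\tau>0$ and (A4) ensures $1-\delta-\theta>0$, the coefficient on the right is strictly positive, so dividing through by $\tau$ and rearranging is equivalent to
\[
\tau \geq \frac{8\sqrt{2}\sigma\,(1-\delta+\theta)/(1-\delta)}{\,1-\delta-\theta^2/(1-\delta)\,}\sqrt{\frac{(1+\alpha)S\log p}{n}}.
\]

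The second step is simply to simplify the constant on the right. Writing $1-\delta-\theta^2/(1-\delta)=\bigl((1-\delta)^2-\theta^2\bigr)/(1-\delta)$ and using the difference-of-squares factorization, the coefficient collapses via
\[
\frac{(1-\delta+\theta)/(1-\delta)}{((1-\delta)^2-\theta^2)/(1-\delta)} = \frac{1-\delta+\theta}{(1-\delta+\theta)(1-\delta-\theta)} = \frac{1}{1-\delta-\theta}.
\]
So the required lower bound reduces to $\tau \geq \tfrac{8\sqrt{2}\sigma}{1-\delta-\theta}\sqrt{(1+\alpha)S\log p/n} = A\sqrt{(1+\alpha)S\log p/n}$, which is precisely the hypothesis.

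There is no substantive obstacle: the lemma is a purely algebraic repackaging. The only care needed is verifying the sign conventions — specifically, that (A4) makes both $1-\delta-\theta$ and $1-\delta-\theta^2/(1-\delta)$ positive — so that each rearrangement preserves the direction of the inequality. The mechanism doing all of the work is the cancellation of the factor $(1-\delta+\theta)$ produced by the difference of squares, which is what turns the numerator of Lemma \ref{lem:term2} and the coefficient of Lemma \ref{lem:term1} into the clean combined constant $A = 8\sqrt{2}\sigma/(1-\delta-\theta)$ appearing throughout the paper.
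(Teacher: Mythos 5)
Your proof is correct, and since the paper states Lemma \ref{lem:cleanup} without proof, your algebraic verification---dividing by $\tau>0$, factoring $1-\delta-\theta^2/(1-\delta)=(1-\delta-\theta)(1-\delta+\theta)/(1-\delta)$, and cancelling the factor $1-\delta+\theta$ to recover the threshold $A\sqrt{(1+\alpha)S\log p/n}$---is exactly the intended argument. The sign checks you flag (that (A4) makes both $1-\delta-\theta$ and $1-\delta-\theta^2/(1-\delta)$ positive, so the rearrangements preserve the inequality) are the only delicate points, and you handle them correctly.
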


\begin{proof}[Proof of Theorem \ref{UG}]
%% Stage 1 -- explicit formula
First, we exhibit an explicit formula for the posterior. Given any measurable set $U\subset\mathbb{R}^n$, we have that
\begin{eqnarray}
&&\int_U f(y\vert\beta)d\Pi(\beta)\\
&=& \binom{p}{S}^{-1}\sum_{\gamma\in\{0,1\}_S^p}\int_U (2\pi\sigma^2)^{-n/2}e^{-\Vert y-X\beta_\gamma\Vert_\lt^2/2\sigma^2}(2\pi V^2)^{-S/2} e^{-\Vert \beta_\gamma\Vert_\lt^2/2V^2}d\beta_\gamma\nonumber\\
&=& \binom{p}{S}^{-1}(2\pi\sigma^2)^{-n/2}(2\pi V^2)^{-S/2}\sum_{\gamma\in\{0,1\}_S^p}\int_U e^{-\frac{\Vert y-X\beta_\gamma\Vert_\lt^2}{2\sigma^2}-\frac{\Vert\beta_\gamma\Vert_\lt^2}{2V^2}}d\beta_\gamma
\end{eqnarray}
\noindent Completing the square gives us
\begin{equation}
\frac{\Vert y-X\beta_\gamma\Vert_\lt^2}{2\sigma^2}+\frac{\Vert\beta_\gamma\Vert_\lt^2}{2V^2}=\frac{1}{2}(\beta_\gamma-\mu_\gamma)^T\Sigma_\gamma^{-2}(\beta_\gamma-\mu_\gamma)+\frac{1}{2\sigma^2}\Vert y\Vert_\lt^2-\frac{1}{2}\mu_\gamma^T\Sigma_\gamma^{-2}\mu_\gamma\nonumber
\end{equation}
Therefore, if $U=\mathbb{R}^n$, we have that
\begin{equation}
\int_U e^{-\frac{\Vert y-X\beta_\gamma\Vert_\lt^2}{2\sigma^2}-\frac{\Vert\beta_\gamma\Vert_\lt^2}{2V^2}}d\beta_\gamma= (2\pi)^{S/2}\text{det}(\Sigma_\gamma) e^{-\frac{1}{2\sigma^2}\Vert y\Vert_\lt^2+\frac{1}{2}\mu_\gamma^T\Sigma_\gamma^{-2}\mu_\gamma}
\end{equation}
and hence
\begin{eqnarray}
\int_{\mathbb{R}^n} f(y\vert\beta)d\Pi(\beta)&=&e^{-\frac{1}{2\sigma^2}\Vert y\Vert_\lt^2}\binom{p}{S}^{-1}(2\pi\sigma^2)^{-n/2}V^{-S}\sum_{\gamma\in\{0,1\}_S^p}\text{det}(\Sigma_\gamma) e^{\frac{1}{2}\mu_\gamma^T\Sigma_\gamma^{-2}\mu_\gamma}\nonumber
\end{eqnarray}
On the other hand,\small
\begin{eqnarray}
&&\int_{B_{2\varepsilon}^{\lt}(\beta^0)}f(y\vert\beta)d\Pi(\beta)\\
&=&\binom{p}{S}^{-1}(2\pi\sigma^2)^{-n/2}(2\pi V^2)^{-S/2}\sum_{\gamma\in\{0,1\}_S^p}\int_{B_{2\varepsilon}^{\lt}(\beta^0)} e^{-\frac{\Vert y-X\beta_\gamma\Vert_\lt^2}{2\sigma^2}-\frac{\Vert\beta_\gamma\Vert_\lt^2}{2V^2}}d\beta_\gamma\nonumber\\
&=&e^{-\frac{1}{2\sigma^2}\Vert y\Vert_\lt^2}\binom{p}{S}^{-1}(2\pi\sigma^2)^{-n/2}V^{-S}\sum_{\gamma\in\{0,1\}_S^p}\det(\Sigma_\gamma)e^{\frac{1}{2}\mu_\gamma^T\Sigma_\gamma^{-2}\mu_\gamma}\int_{B_{2\varepsilon}^{\lt}(\beta^0)} \frac{e^{-\frac{1}{2}(\beta_\gamma-\mu_\gamma)^T\Sigma_\gamma^{-2}(\beta_\gamma-\mu_\gamma)}}{\sqrt{2\pi}^S\det(\Sigma_\gamma)}d\beta_\gamma\nonumber
\end{eqnarray}
\normalsize
Putting this all together, we have
\begin{eqnarray}
\Pi(B_{2\varepsilon}^{\lt}(\beta^0)\vert y)&=&\frac{\displaystyle\int_{B_{2\varepsilon}^{\lt}(\beta^0)}f(y\vert\beta)d\Pi(\beta)}{\displaystyle\int_{\mathbb{R}^n} f(y\vert\beta)d\Pi(\beta)}\\
&=&\frac{\displaystyle\sum_{\gamma\in\{0,1\}_S^p}\det(\Sigma_\gamma)e^{\frac{1}{2}\mu_\gamma^T\Sigma_\gamma^{-2}\mu_\gamma}\int_{B_{2\varepsilon}^{\lt}(\beta^0)} \frac{e^{-\frac{1}{2}(\beta_\gamma-\mu_\gamma)^T\Sigma_\gamma^{-2}(\beta_\gamma-\mu_\gamma)}}{\sqrt{2\pi}^S\det(\Sigma_\gamma)}d\beta_\gamma}
{\displaystyle\sum_{\gamma\in\{0,1\}_S^p}\text{det}(\Sigma_\gamma) e^{\frac{1}{2}\mu_\gamma^T\Sigma_\gamma^{-2}\mu_\gamma}}\nonumber.
\end{eqnarray}
%%% Stage 2 -- bound the numerator
Now that we have an explicit expression for the posterior, we bound this expression below by reducing the sum in the numerator to the indices in the set
\begin{equation}
G=\left\{\gamma\in\{0,1\}_S^p:\Vert\beta^0_{\gamma^0\setminus\gamma}\Vert_{\lt}\leq A\sqrt{\frac{(1+\alpha)S\log p}{n}}\right\}
\end{equation}
That is, we restrict to the indices that capture most of the mass of $\beta^0$. If $\gamma\in G$, then
\begin{eqnarray}
\Vert\beta_\gamma^0-\mu_{\gamma}\Vert_{\lt}&=&\Vert\beta_\gamma^0-(X_\gamma^TX_\gamma+\frac{\sigma^2}{V^2}I_{S\times S})^{-1}X_\gamma^T(X\beta^0+e)\Vert_{\lt}\\
&\leq&\Vert(I_{S\times S}-(X_\gamma^TX_\gamma+\frac{\sigma^2}{V^2}I_{S\times S})^{-1}X_\gamma^TX_\gamma)\beta_\gamma^0\Vert_{\lt}\\
&&+\Vert(X_\gamma^TX_\gamma+\frac{\sigma^2}{V^2}I_{S\times S})^{-1}X_\gamma^TX_{\gamma^0\setminus\gamma}\beta_{\gamma^0\setminus\gamma}^0\Vert_{\lt}\\
&&+\Vert(X_\gamma^TX_\gamma+\frac{\sigma^2}{V^2}I_{S\times S})^{-1}X_\gamma^Te\Vert_{\lt}\\
&\leq&\frac{\sigma^2}{n(1-\delta)V^2+\sigma^2}\Vert\beta_\gamma^0\Vert_{\lt}\\
&&+\frac{n\theta}{n(1-\delta)+\sigma^2/V^2}\Vert\beta_{\gamma^0\setminus\gamma}^0\Vert_{\lt}\\
&&+2\frac{\sqrt{nS(1+\alpha)\sigma^2\log p}}{n(1-\delta)+\sigma^2/V^2}\\
&=&\frac{\sigma^2\Vert\beta_\gamma^0\Vert_{\lt}+n\theta V^2\Vert\beta_{\gamma^0\setminus\gamma}^0\Vert_{\lt}+2\sigma V^2\sqrt{(1+\alpha)Sn\log p}}{n(1-\delta)V^2+\sigma^2}.\nonumber
\end{eqnarray}
Given this bound, we may conclude that (using the bound $\Vert\beta_\gamma^0\Vert_{\lt}\leq C\sqrt{S}$, but note that a large enough $V$ may be chosen to dampen this contribution)
\begin{eqnarray}
\Vert\beta^0-\mu_\gamma\Vert_{\lt}&\leq&\Vert\beta_{\gamma^0\setminus\gamma}^0\Vert_{\lt}+\Vert\beta_\gamma^0-\mu_\gamma\Vert_{\lt}\\
&\leq& \frac{\sigma^2\Vert\beta_\gamma^0\Vert_{\lt}+(n(1-\delta+\theta)V^2+\sigma^2)\Vert\beta_{\gamma^0\setminus\gamma}^0\Vert_{\lt}+2\sigma V^2\sqrt{(1+\alpha)Sn\log p}}{n(1-\delta)V^2+\sigma^2}\nonumber\\
&\leq&\varepsilon
\end{eqnarray}
and hence
\begin{eqnarray}
\int_{B_{2\varepsilon}^{\lt}(\beta^0)} \frac{e^{-\frac{1}{2}(\beta_\gamma-\mu_\gamma)^T\Sigma_\gamma^{-2}(\beta_\gamma-\mu_\gamma)}}{\sqrt{2\pi}^S\det(\Sigma_\gamma)}d\beta_\gamma&\geq&\int_{B_{\varepsilon}^{\lt}(\mu_\gamma)} \frac{e^{-\frac{1}{2}(\beta_\gamma-\mu_\gamma)^T\Sigma_\gamma^{-2}(\beta_\gamma-\mu_\gamma)}}{\sqrt{2\pi}^S\det(\Sigma_\gamma)}d\beta_\gamma\nonumber\\
&\geq&\int_{B_{\varepsilon}^{\lt}(\mu_{\gamma})}\frac{e^{-\frac{1}{2}(n\frac{1-\delta}{\sigma^2}+\frac{1}{V^2})\Vert\beta_{\gamma}-\mu_{\gamma}\Vert_{\lt}^2}}{\sqrt{2\pi\left(n\frac{1-\delta}{\sigma^2}+\frac{1}{V^2}\right)^{-1}}^S}d\beta_{\gamma}\nonumber\\
&\geq&1-e^{-\frac{1}{4}(n\frac{1-\delta}{\sigma^2}+\frac{1}{V^2})\varepsilon^2}.
\end{eqnarray}
This last expression holds because of the hypothesis $(n\frac{1-\delta}{\sigma^2}+\frac{1}{V^2})\varepsilon^2\geq S/2$. 
%% Stage 3 -- bound the denominator
At this stage, we have constructed the bound
\begin{equation}
\Pi(B_{2\varepsilon}^{\lt}(\beta^0)\vert y)\geq \frac{\sum_{\gamma\in G}\det(\Sigma_\gamma)e^{\frac{1}{2}\mu_\gamma^T\Sigma_\gamma^{-2}\mu_\gamma}}{\sum_{\gamma\in\{0,1\}_S^p}\det(\Sigma_\gamma)e^{\frac{1}{2}\mu_\gamma^T\Sigma_\gamma^{-2}\mu_\gamma}}(1-e^{-\frac{1}{4}(n\frac{1-\delta}{\sigma^2}+\frac{1}{V^2})\varepsilon^2}).
\end{equation}
We now approach the expression
\begin{eqnarray}
&&\frac{\sum_{\gamma\in G}\det(\Sigma_\gamma)e^{\frac{1}{2}\mu_\gamma^T\Sigma_\gamma^{-2}\mu_\gamma}}{\sum_{\gamma\in\{0,1\}_S^p}\det(\Sigma_\gamma)e^{\frac{1}{2}\mu_\gamma^T\Sigma_\gamma^{-2}\mu_\gamma}}\label{eq:UGdenominator}\\
&=&\frac{1}{\displaystyle1+\frac{\sum_{\gamma\in\{0,1\}_S^p\setminus G}\det(\Sigma_\gamma)e^{\frac{1}{2}\mu_\gamma^T\Sigma_\gamma^{-2}\mu_\gamma}}{\sum_{\gamma\in G}\det(\Sigma_\gamma)e^{\frac{1}{2}\mu_\gamma^T\Sigma_\gamma^{-2}\mu_\gamma}}}\nonumber\\
&=&\frac{1}{\displaystyle1+\sum_{\gamma^\prime\in\{0,1\}_S^p\setminus G}\frac{1}{\sum_{\gamma\in G}\frac{\det(\Sigma_\gamma)}{\det(\Sigma_{\gamma^\prime})}e^{\frac{1}{2}(\mu_\gamma^T\Sigma_\gamma^{-2}\mu_\gamma-\mu_{\gamma^\prime}^T\Sigma_{\gamma^\prime}^{-2}\mu_{\gamma^\prime})}}}\nonumber\\
&\geq&\frac{1}{\displaystyle1+\sum_{\gamma^\prime\in\{0,1\}_S^p\setminus G}\frac{1}{\sum_{\gamma^0\leq\gamma}\frac{\det(\Sigma_\gamma)}{\det(\Sigma_{\gamma^\prime})}e^{\frac{1}{2}(\mu_\gamma^T\Sigma_\gamma^{-2}\mu_\gamma-\mu_{\gamma^\prime}^T\Sigma_{\gamma^\prime}^{-2}\mu_{\gamma^\prime})}}}.\nonumber
\end{eqnarray}
In the last step, we have reduced the index set over the sum inside the continued fraction from $G$ to its subset $\{\gamma\in\{0,1\}_S^p:\gamma^0\leq\gamma\}$. Based on this initial bound, we shall seek upper bounds on the expressions
\begin{equation}
\frac{\det(\Sigma_{\gamma^\prime})}{\det(\Sigma_{\gamma})}e^{\frac{1}{2}(\mu_{\gamma^\prime}^T\Sigma_{\gamma^\prime}^{-2}\mu_{\gamma^\prime}-\mu_\gamma^T\Sigma_\gamma^{-2}\mu_\gamma)}
\end{equation}
for all $\gamma^0\leq\gamma$ and $\gamma^\prime\in\{0,1\}_S^p\setminus G$. First, we note that 
\begin{eqnarray}
\frac{\det(\Sigma_{\gamma^\prime})}{\det(\Sigma_{\gamma})}\leq\left(\frac{n(1+\delta)V^2+\sigma^2}{n(1-\delta)V^2+\sigma^2}\right)^{S/2}
\end{eqnarray}
by Lemma \ref{lem:det}. The remaining expressions that we must examine have the form
\begin{eqnarray}
\exp\left\{\frac{1}{2}\left(\mu_{\gamma^\prime}^T\Sigma_{\gamma^\prime}^{-2}\mu_{\gamma^\prime}-\mu_{\gamma}^T\Sigma_{\gamma}^{-2}\mu_{\gamma}\right)\right\}.
\end{eqnarray}
Since
\begin{eqnarray}
\mu_{\gamma^\prime}^T\Sigma_{\gamma^\prime}^{-2}\mu_{\gamma^\prime}-\mu_{\gamma}^T\Sigma_{\gamma}^{-2}\mu_{\gamma}&=&\left(\frac{1}{\sigma^2}\Sigma_{\gamma^\prime}^2X_{\gamma^\prime}^Ty\right)^T\Sigma_{\gamma^\prime}^{-2}\left(\frac{1}{\sigma^2}\Sigma_{\gamma^\prime}^2X_{\gamma^\prime}^Ty\right)\\
&&-\left(\frac{1}{\sigma^2}\Sigma_{\gamma}^2X_{\gamma}^Ty\right)^T\Sigma_{\gamma}^{-2}\left(\frac{1}{\sigma^2}\Sigma_{\gamma}^2X_{\gamma}^Ty\right)\\
&=&\frac{1}{\sigma^2}y^TX_{\gamma^\prime}(X_{\gamma^\prime}^TX_{\gamma^\prime}+\frac{\sigma^2}{V^2}I_{\vert\gamma^\prime\vert\times\vert\gamma^\prime\vert})^{-1}X_{\gamma^\prime}^Ty\nonumber\\
&&-\frac{1}{\sigma^2}y^TX_{\gamma}(X_{\gamma}^TX_{\gamma}+\frac{\sigma^2}{V^2}I_{\vert\gamma\vert\times\vert\gamma\vert})^{-1}X_{\gamma}^Ty,\nonumber
\end{eqnarray}
we focus on bounding the expression
\begin{equation}\small
y^T\left(X_{\gamma^\prime}(X_{\gamma^\prime}^TX_{\gamma^\prime}+\frac{\sigma^2}{V^2}I_{\vert\gamma^\prime\vert\times\vert\gamma^\prime\vert})^{-1}X_{\gamma^\prime}^T-X_{\gamma}(X_{\gamma}^TX_{\gamma}+\frac{\sigma^2}{V^2}I_{\vert\gamma\vert\times\vert\gamma\vert})^{-1}X_{\gamma}^T\right)y\nonumber
\end{equation}\normalsize
We have
\begin{eqnarray}
&&X_{\gamma^\prime}(X_{\gamma^\prime}^TX_{\gamma^\prime}+\frac{\sigma^2}{V^2}I_{\vert\gamma^\prime\vert\times\vert\gamma^\prime\vert})^{-1}X_{\gamma^\prime}^T-X_{\gamma}(X_{\gamma}^TX_{\gamma}+\frac{\sigma^2}{V^2}I_{\vert\gamma\vert\times\vert\gamma\vert})^{-1}X_{\gamma}^T\nonumber\\
&\preceq& P_{\gamma^\prime}-P_{\gamma}+\left(P_{\gamma}-X_{\gamma}(X_{\gamma}^TX_{\gamma}+\frac{\sigma^2}{V^2}I_{\vert\gamma\vert\times\vert\gamma\vert})^{-1}X_{\gamma}^T\right),
\end{eqnarray}
in the positive definite ordering. By Lemma \ref{lem:projappx} we have
\begin{eqnarray}
&&y^T\left(P_{\gamma}-X_{\gamma}(X_{\gamma}^TX_{\gamma}+\frac{\sigma^2}{V^2}I_{\vert\gamma\vert\times\vert\gamma\vert})^{-1}X_{\gamma}^T\right)y\\
&\leq&\frac{\sigma^2}{n(1-\delta)V^2+\sigma^2}\Vert y\Vert_{\lt}^2
\end{eqnarray}
On the other hand, we may expand
\begin{eqnarray}
y^T(P_{\gamma^\prime}-P_{\gamma})y&=&\left(X_{\gamma^0}\beta_{\gamma^0}^0\right)^T(P_{\gamma^\prime}-P_\gamma)X_{\gamma_0}\beta_{\gamma^0}^0\label{term1}\\
&&+2\left(X_{\gamma^0}\beta_{\gamma^0}^0\right)^T(P_{\gamma^\prime}-P_{\gamma})e\label{term2}\\
&&+e^T(P_{\gamma^\prime}-P_{\gamma})e,\label{term3}.
\end{eqnarray}
By applying Lemmas \ref{lem:term1}, \ref{lem:term2}, and \ref{lem:noise} to (\ref{term1}), (\ref{term2}), and (\ref{term3}) respectively, we obtain the bound
\begin{eqnarray*}
y^T(P_{\gamma^\prime}-P_\gamma)y&\leq&-n\left(1-\delta-\frac{\theta^2}{1-\delta}\right)\Vert\beta_{\gamma^0\setminus\gamma^\prime}^0\Vert_{\lt}^2\\
&&+4\sigma\left(1+\frac{\theta}{1-\delta}\right)\sqrt{(1+\alpha)Sn\log p}\Vert\beta_{\gamma^0\setminus\gamma^\prime}^0\Vert_{\lt}\\
&&+4\frac{\sigma^2}{1-\delta}(1+\alpha)S\log p
\end{eqnarray*}
Since $\gamma^\prime\in \{0,1\}_S^p\setminus G$, we may now employ Lemma \ref{lem:cleanup} and the fact that $\gamma^\prime\in\{0,1\}_S^p\setminus G$ to obtain the bounds
\begin{eqnarray*}
y^T(P_{\gamma^\prime}-P_\gamma)y&\leq&-\frac{n}{2}\left(1-\delta-\frac{\theta^2}{1-\delta}\right)\Vert\beta_{\gamma^0\setminus\gamma^\prime}^0\Vert_{\lt}^2+4\frac{\sigma^2}{1-\delta}(1+\alpha)S\log p\\
&\leq&\underbrace{\left(-\frac{A^2}{2}\left(1-\delta-\frac{\theta^2}{1-\delta}\right)+4\frac{\sigma^2}{1-\delta}\right)(1+\alpha)}_{= -2\sigma^2(1+\eta)}S\log p.
\end{eqnarray*}
The condition  $\delta<29/31$ in the statement of the theorem arises because we require $\eta>0$, or equivalenty $1+\eta>1$.  In particular, substituting $A=\frac{8\sqrt{2}}{1-\delta-\theta}$ above, we require 
\begin{eqnarray*}
%\Big(\frac{32}{1-\delta-\theta}\Big(1-\delta-\frac{\theta^2}{1-\delta}\Big) -\frac{2}{1-\delta}\Big)(1+\alpha)>1\\
%(1+\alpha)\frac{32(1-\delta+\theta)-2}{1-\delta}>1\\
\frac{30-32\delta+32\theta}{1-\delta}(1+\alpha)>1.
\end{eqnarray*}
Thus, it suffices to have $\alpha>0$ and $\delta<\frac{29}{31}$.
Accumulating the bounds we have established thus far, we have
\begin{eqnarray*}
\frac{1}{2}\left(\mu_{\gamma^\prime}^T\Sigma_{\gamma^\prime}^{-2}\mu_{\gamma^\prime}-\mu_{\gamma}^T\Sigma_{\gamma}^{-2}\mu_{\gamma}\right)&\leq& -(1+\eta) S\log p+\frac{1}{2}\frac{1}{n(1-\delta)V^2+\sigma^2}\Vert y\Vert_{\lt}^2,
\end{eqnarray*}
and hence
\begin{eqnarray*}
\frac{\det(\Sigma_{\gamma^\prime})}{\det(\Sigma_{\gamma})}e^{\frac{1}{2}(\mu_{\gamma^\prime}^T\Sigma_{\gamma^\prime}^{-2}\mu_{\gamma^\prime}-\mu_\gamma^T\Sigma_\gamma^{-2}\mu_\gamma)}&\leq&\left(\frac{n(1+\delta)V^2+\sigma^2}{n(1-\delta)V^2+\sigma^2}\right)^{S/2}e^{\frac{1}{n(1-\delta)V^2+\sigma^2}\frac{\Vert y\Vert_{\lt}^2}{2}}p^{-(1+\eta) S}
\end{eqnarray*}
for all $\gamma^0\leq\gamma$ and $\gamma^\prime\in\{0,1\}_S^p$. Therefore, we may bound (\ref{eq:UGdenominator}) from below by
\begin{eqnarray*}
&&\frac{1}{1+\left(\frac{n(1+\delta)V^2+\sigma^2}{n(1-\delta)V^2+\sigma^2}\right)^{S/2}e^{\frac{1}{n(1-\delta)V^2+\sigma^2}\frac{\Vert y\Vert_{\lt}^2}{2}}p^{-(1+\eta) S}\vert\{0,1\}_S^p\setminus G\vert}\\
&\geq&\frac{1}{1+\left(\frac{n(1+\delta)V^2+\sigma^2}{n(1-\delta)V^2+\sigma^2}\right)^{S/2}e^{\frac{1}{n(1-\delta)V^2+\sigma^2}\frac{\Vert y\Vert_{\lt}^2}{2}}p^{-(1+\eta)S}\binom{p}{S}}\\
&\geq&\frac{1}{1+\left(e^2\frac{n(1+\delta)V^2+\sigma^2}{n(1-\delta)V^2+\sigma^2}\right)^{S/2}e^{\frac{1}{n(1-\delta)V^2+\sigma^2}\frac{\Vert y\Vert_{\lt}^2}{2}}S^{-S}p^{-\eta S}}
\end{eqnarray*}
using the fact that $\binom{p}{S}\leq\left(\frac{ep}{S}\right)^S$. This completes the proof.
\end{proof}


\begin{thebibliography}{9}

\bibitem{Aka74}
\textsc{Akaike, H.} (1974). A new look at the statistical model identification. \textit{IEEE Transactions on Automatic Control} \textbf{19} 716--723.

\bibitem{ADLBS13}
\textsc{Armagan, A.}, \textsc{Dunson, D.} , \textsc{Lee, J.}, \textsc{Bajwa, W.}, and \textsc{Strawn, N.} (2013). Posterior consistency in linear models under shrinkage priors. \textit{Biometrika}, \textbf{100}, 1011-1018.

\bibitem{BCDH10}
\textsc{Baraniuk, R. G.}, \textsc{Cevher, V.}, \textsc{Duarte, M. F.|},and \textsc{Hegde, C.} (2010). Model-based compressive sensing. \textit{Information Theory, IEEE Transactions on}, \textbf{56}(4), 1982-2001.

\bibitem{BDDW08}
\textsc{Baraniuk, R. G.}, \textsc{Davenport, M.}, \textsc{DeVore, R.}, and \textsc{Wakin, M.} (2008). A simple proof of the restricted isometry property for random matrices. \textit{Constructive Approximation}, \textbf{28}(3), 253-263.

\bibitem{BLM12}
\textsc{Bayati, M.}, \textsc{Lelarge, M.}, and \textsc{Montanari, A.} (2012). Universality in polytope phase transitions and message passing algorithms. arXiv preprint arXiv:1207.7321.

\bibitem{BM11}
\textsc{Bayati, M.} and \textsc{Montanari, A.} (2011). The dynamics of message passing on dense graphs, with applications to compressed sensing. Information Theory, IEEE Transactions on, \textbf{57}(2), 764-785.

\bibitem{BI09}
\textsc{Berinde, R.} and \textsc{Indyk, P.} (2009, September). Sequential sparse matching pursuit. In Communication, Control, and Computing, 2009. Allerton 2009. 47th Annual Allerton Conference on (pp. 36-43). IEEE.

\bibitem{BD10}
\textsc{Blumensath, T.} and \textsc{Davies, M. E.} (2010). Normalized iterative hard thresholding: Guaranteed stability and performance. Selected Topics in Signal Processing, IEEE Journal of, \textbf{4}(2), 298-309.

\bibitem{Bontemps11}
\textsc{Bontemps, D.} (2011). Bernstein-Von Mises theorems for Gaussian regression with increasing number of regressors. \textit{Annals of Statistics} \textbf{39}, 2557--2584.

\bibitem{BDFKK11}
\textsc{Bourgain, J.}, \textsc{Dilworth, S.}, \textsc{Ford, K.}, \textsc{Konyagin, S.}, and \textsc{Kutzarova, D.} (2011). Explicit constructions of RIP matrices and related problems. Duke Mathematical Journal, \textbf{159}(1), 145-185.

\bibitem{CV12}
\textsc{Castillo, I.} and \textsc{Van Der Vaart, A.} (2012). Needles and Straw in a Haystack: Posterior concentration for possibly sparse sequences. The Annals of Statistics, 40(4), 2069-2101.

\bibitem{CP07}
\textsc{Cand\`{e}s, E. J. } and \textsc{Plan, Y.} (2007) Near-ideal model selection by $\ell_1$ minimization. \textit{Annals of Statistics} \textbf{37}, 2145--2177.

\bibitem{CRT05}
\textsc{Cand\`{e}s, E. J. }, \textsc{Romberg, J.}, and \textsc{Tao, T.} (2005). Stable signal recovery from incomplete and inaccurate measurements. \textit{Comm. Pure Appl. Math.} \textbf{59} 1207--1223. 

\bibitem{CT05}
\textsc{Cand\`{e}s, E. J.} and \textsc{Tao, T.} (2007). The Dantzig selector: statistical estimation when p is much larger than n. \textit{Annals of Statistics}, \textbf{35} 2313--2351.

\bibitem{Cev09}
\textsc{Cevher, V.} (2009). Learning with Compressible Priors. \textit{Proc. Neural Information Processing Systems}, Vancouver, B.C., Canada.

\bibitem{Donoho06}
\textsc{Donoho, D.} (2006). Compressed sensing. \textit{IEEE Transactions on Information Theory}, \textbf{52}, 1289--1306.

\bibitem{DT05}
\textsc{Donoho, D. L.}, and \textsc{Tanner, J.} (2005). Neighborliness of randomly projected simplices in high dimensions. Proceedings of the National Academy of Sciences of the United States of America, 102(27), 9452-9457.

\bibitem{DMM09}
\textsc{Donoho, D. L.}, \textsc{Maleki, A.}, and \textsc{Montanari, A.} (2009). Message-passing algorithms for compressed sensing. Proceedings of the National Academy of Sciences, 106(45), 18914-18919.

\bibitem{FM11}
\textsc{Fickus, M.} and \textsc{Mixon, D. G.} (2011, September). Deterministic matrices with the restricted isometry property. In Proc. of SPIE Vol (Vol. 8138, pp. 81380A-1).

\bibitem{FoucartRauhut2013}
\textsc{Foucart, S.} and \textsc{Rauhut, H.} (2013). A mathematical introduction to compressive sensing. \textit{Appl. Numer. Harmon. Anal. Birkh\"{a}user}, Boston.

\bibitem{GBK08}
\textsc{Gamper, U.}, \textsc{Boesiger, P.}, and \textsc{Kozerke, S.} (2008). Compressed sensing in dynamic MRI. Magnetic Resonance in Medicine, {\bf 59}(2), 365-373.

\bibitem{GR03}
\textsc{Ghosh, J. K.} and \textsc{Ramamoorthi, R. V.} (2003). \textit{Bayesian Nonparametrics}. Springer.

\bibitem{Ghosal99}
\textsc{Ghosal, S.} (1999). Asymptotic normality of posterior distributions in high-dimensional linear models. \textit{Bernoulli}, \textbf{5} 315--331.

\bibitem{GCD11}
\textsc{Gribonval, R.}, \textsc{Cevher, V.} and \textsc{Davies, M.} (2011) Compressible priors for high-dimensional statistics, submitted to \textit{IEEE Transactions on Information Theory}.

\bibitem{HZM09}
\textsc{Huang, J.}, \textsc{Zhang, T.}, and \textsc{Metaxas, D.} (2009). Learning with structured sparsity. In Proceedings of the 26th Annual International Conference on Machine Learning (pp. 417-424). ACM.

\bibitem{Iwen2009}
\textsc{Iwen, M. A.} (2009, March). Simple deterministically constructible RIP matrices with sublinear Fourier sampling requirements. In Information Sciences and Systems, 2009. CISS 2009. 43rd Annual Conference on (pp. 870-875). IEEE.

\bibitem{JM13a}
\textsc{Javanmard, A.} and \textsc{Montanari, A.} (2013). Confidence Intervals and Hypothesis Testing for High-Dimensional Regression. arXiv preprint arXiv:1306.3171.

\bibitem{JM13b}
\textsc{Javanmard, A.} and \textsc{Montanari, A.} (2013). Nearly Optimal Sample Size in Hypothesis Testing for High-Dimensional Regression. arXiv preprint arXiv:1311.0274.

\bibitem{Jiang07}
\textsc{Jiang, W.} (2007). Bayesian variable selection for high dimensional generalized linear models: Convergence rates of the fitted densities. \textit{The Annals of Statistics}, \textbf{35} 1487--1511.

%\bibitem{JL84}
%\textsc{Johnson, W. B.} and \textsc{Lindenstrauss, J.} (1984). Extensions of Lipschitz mappings into a Hilbert space. \textit{Contemporary mathematics}, \textbf{26}(189-206), 1.

\bibitem{LDP07}
\textsc{Lustig, M.}, \textsc{Donoho, D.}, and \textsc{Pauly, J. M.} (2007). Sparse MRI: The application of compressed sensing for rapid MR imaging. Magnetic Resonance in Medicine, 58(6), 1182-1195.

\bibitem{Lopes12}
\textsc{Lopes, M. E.} (2012). Estimating unknown sparsity in compressed sensing. arXiv preprint arXiv:1204.4227.

\bibitem{Mallat08}
\textsc{Mallat, S.} (2008). \textit{A wavelet tour of signal processing: the sparse way.} Access Online via Elsevier.

%\bibitem{Mat08}
%\textsc{Matou\v{s}ek, J.} (2008). On variants of the Johnson--Lindenstrauss lemma. \textit{Random Structures \& Algorithms}, \textbf{33}(2), 142-156.

\bibitem{NT09}
\textsc{Needell, D.} and \textsc{Tropp, J. A.} (2009). CoSaMP: Iterative signal recovery from incomplete and inaccurate samples. Applied and Computational Harmonic Analysis, \textbf{26}(3), 301-321.

%\bibitem{PC08}
%\textsc{Park, T.} and \textsc{Casella, G.} (2008). The Bayesian LASSO. Journal of the American Statistical Association, 103(482), 681-686.

\bibitem{RFG12}
\textsc{Rangan, S.}, \textsc{Fletcher, A. K.}, and \textsc{Goyal, V. K.} (2012). Asymptotic analysis of MAP estimation via the replica method and applications to compressed sensing. \textit{Information Theory, IEEE Transactions on}, \textbf{58}(3), 1902-1923.

\bibitem{RV08}
\textsc{Rudelson, M.} and \textsc{Vershynin, R.} (2008). On sparse reconstruction from Fourier and Gaussian measurements. Communications on Pure and Applied Mathematics, 61(8), 1025-1045.

\bibitem{SY10}
\textsc{Saab, R.} and \textsc{Y�lmaz, \"{O}}. (2010). Sparse recovery by non-convex optimization-instance optimality. \textit{Applied and Computational Harmonic Analysis}, \textbf{29}(1), 30-48.

\bibitem{Sch65}
\textsc{Schwartz, L.} (1965). On Bayes procedure. \textit{Probability Theory and Related Fields}. \textbf{4} 10--26.

\bibitem{Tib96}
\textsc{Tibshirani, R.} (1996). Regression shrinkage and selection via the lasso. Journal of the Royal Statistical Society. Series B (Methodological), 267-288.

\bibitem{TG07}
\textsc{Tropp, J. A.} and \textsc{Gilbert, A. C.} (2007). Signal recovery from random measurements via orthogonal matching pursuit. \textit{Information Theory, IEEE Transactions on,} \textbf{53}(12), 4655-4666.
Chicago	

\bibitem{Tro06}
\textsc{Tropp, J. A.} (2006). Just relax: Convex programming methods for identifying sparse signals in noise. \textit{Information Theory, IEEE Transactions on}, \textbf{52}(3), 1030-1051.

\bibitem{Tro04}
\textsc{Tropp, J.} (2004). \textit{Topics in Sparse Approximation.}
Ph.D. dissertation, Univ. Texas at Austin.

\bibitem{vdGBR13}
\textsc{van de Geer, S.}, \textsc{B\"{u}hlmann, P.}, and \textsc{Ritov, Y. A.} (2013). On asymptotically optimal confidence regions and tests for high-dimensional models. arXiv preprint arXiv:1303.0518.

\bibitem{Wang2009}
\textsc{Wang, H.S.} (2009). Forward regression for ultra high-dimensional variable screening. \textit{Journal of the American Statistical Association} {\bf 104} 1512--1524 

\bibitem{Wang2011}
\textsc{Wang, T.} and \textsc{Zhu, L.X.} (2011). Consistent tuning parameter selection in high dimensional sparse linear regression. \textit{Journal of Multivariate Analysis}, \textbf{102} 1141--1151.

\bibitem{Ward2009}
\textsc{Ward, R.} (2009). Compressed sensing with cross validation. \textit{Information Theory, IEEE Transactions on}, \textbf{55}(12), 5773-5782.

\bibitem{Zhang2009}
\textsc{Zhang, T.} (2009). Some sharp performance bounds for least squares regression with $L_1$ regularization. \textit{The Annals of Statistics}, {\bf 37} 2109--2144.

\end{thebibliography}
\end{document}